\newcommand{\EE}{\mathbb{ E}}
\newcommand{\PP}{\mathbb{P}}
\newcommand{\R}{\mathbb{R}}
\newcommand{\C}{\mathbb{C}}
\newcommand{\HH}{\mathbb{H}}
\newcommand{\N}{\mathbb{N}}
\newcommand{\pa}{\partial}
\newcommand{\F}{{\cal F}}
\def\eps{\varepsilon}
\def\til{\widetilde}
\def\ha{\widehat}
\def\sem{\setminus}
\def\lin{\overline}
\def\ulin{\underline}
\def\ZZ{{\cal Z}}
\def\rotimes{{\otimes}}
\DeclareMathOperator{\dist}{dist} 
\DeclareMathOperator{\hcap}{hcap} 
\DeclareMathOperator{\Imm}{Im } \DeclareMathOperator{\Ree}{Re }
 \DeclareMathOperator{\cc}{c}
\DeclareMathOperator{\bb}{b}
\DeclareMathOperator{\LP}{LP}
\newcommand{\CLP}{\ha{\LP}}
\theoremstyle{plain}
\theoremstyle{plain}
\newtheorem{Theorem}{Theorem}[section]
\newtheorem{Lemma}[Theorem]{Lemma}
\newtheorem{Corollary}[Theorem]{Corollary}
\newtheorem{Proposition}[Theorem]{Proposition}
\theoremstyle{definition}
\newtheorem{Definition}[Theorem]{Definition}
\numberwithin{equation}{section}
\newcommand{\BGE}{\begin{equation}}
\newcommand{\BGEN}{\begin{equation*}}
\newcommand{\EDE}{\end{equation}}
\newcommand{\EDEN}{\end{equation*}}
\newcommand{\ind}{\mathbbm{1}}
\begin{document}
\title{Existence and Uniqueness of Nonsimple Multiple SLE}
\author{Dapeng Zhan
}
\affil{Michigan State University}
\date{\today}
\maketitle

\begin{abstract}
  We prove the existence and uniqueness of multiple SLE$_\kappa$ associated with any given link pattern for $\kappa\in (4,6]$. We also have the uniqueness for $\kappa\in (6,8)$. The multiple SLE$_\kappa$ law is constructed by first inductively constructing a $\sigma$-finite multiple SLE$_\kappa$ measure and then normalizing the measure whenever it is finite. The total mass of the measure satisfies the conformal covariance, asymptotics and PDE for multiple SLE$_\kappa$ partition functions in the literature subject to the assumption that it is smooth.
\end{abstract}

\section{Introduction}
The Schramm-Loewner evolution (SLE) is a one-parameter ($\kappa\in(0,\infty)$) family of random curves, which are known to be the scaling limits of various two-dimensional discrete models. A chordal SLE curve grows in a simply connected domain connecting two distinct boundary points (more precisely, prime ends). It arises naturally as the candidate of the scaling limit of some lattice model with Dobrushin boundary condition.

When the  Dobrushin boundary condition is replaced by alternative boundary condition, the scaling limit is expected to be multiple SLE. In a multiple $N$-SLE$_\kappa$ configuration, there are $N$ random curves connecting $2N$ distinct marked boundary points of a simply connected domain with the property that the conditional law of any curve given all other curves is a chordal SLE$_\kappa$ curve in one connected component of the complement of the other curves.


When $\kappa\in (0,4]$,  multiple  SLE$_\kappa$  associated with any given link pattern  exists (cf.\ \cite{Julien-Euler},\cite{KL-multiple},\cite{Wu-multiple}) and is unique in law (cf.\ \cite{BPW-multiple}). This is also true for $\kappa\in \{16/3,6\}$ (\cite{BPW-multiple}). When $N=2$, the existence and uniqueness of $2$-SLE$_\kappa$ are known for all $\kappa\in (0,8)$ (cf.\ \cite{HW-hSLE,MSW}).

In this paper we study multiple SLE$_\kappa$ for $\kappa\in (4,8)$. The main result is  the existence and uniqueness  for any $\kappa\in(4,6]$. We also prove the uniqueness in the case $\kappa\in(6,8)$. In fact, for all $\kappa\in (0,8)$, given any link pattern $\alpha$ in a domain $D$, we construct a $\sigma$-finite measure $Q^D_\alpha$ on the space of $N$-tuples of curves associated with $\alpha$ using a cascade relation. When $\kappa\le 6$, $Q^D_\alpha$ is finite, and its normalization is the law of multiple SLE associated with $\alpha$. For $\kappa\in(6,8)$, the same statement holds as long as $Q^D_\alpha$ is known to be finite.

We call the total mass $H^D_\alpha=|Q^D_\alpha|$ a multiple SLE$_\kappa$ partition function. We focus on the case $D=\HH$, in which case $H(\alpha):= H^{\HH}_\alpha$ is a function on an open subset of $\R^{2N}$. Using the construction of $Q^D_\alpha$, we prove that $H$ satisfies conformal covariance and asymptotics for multiple SLE pure partition functions (cf.\ \cite{KP-multiple}). 
We  construct a continuous local martingale using $H$.  If $H$ is known to be smooth, then It\^o's calculus shows that $H$ satisfies a system of PDEs for multiple SLE partition function introduced by Dub\'edat in \cite{Julien-Euler}.

\section*{Acknowledgement}
The author thanks Xin Sun for rasing the question about nonsimple multiple SLE, and  Pu Yu for introducing the irreducible Markov chain theory to the author.

\section{Preliminary}
\subsection{Measures and kernels}
 We use $f(\mu)$ (instead of $f_*(\mu)$) to denote the pushforward of a measure $\mu$ under a measurable map $f$. We use $g\cdot\mu$ or $g(\omega)\cdot \mu(d\omega)$ to denote a measure obtained by weighting a measure $\mu$ by a nonnegative function $g$.

Let $(\Omega_j,\F_j)$, $j=1,2$, be measurable spaces. A function $\nu:\Omega_1\times \F_2\to [0,\infty)$ is called a finite kernel from $\Omega_1$ to $\Omega_2$ if (i) for any $\omega_1\in\Omega_1$, $\nu(\omega_1,\cdot)$ is a finite measure on $(\Omega_2,\F_2)$; and (ii) for any $B\in\F_2$, $\nu(\cdot,B)$ is a measurable function on $(\Omega_1,\F_1)$.  Suppose $\mu$ is a $\sigma$-finite measure on $\Omega_1$, and $\nu$ is a finite kernel from $\Omega_1$ to $\Omega_2$. Using Carath\'eodory's extension theorem, we define $\mu\rotimes \nu$ to be the unique $\sigma$-finite measure  on $\Omega_1\times \Omega_2$, which satisfies that,
\BGE \mu\rotimes\nu(B_1\times B_2)=\int_{B_1} \mu(d\omega_1) \nu(\omega_1,B_2),\quad B_1\in\F_1,\quad B_2\in\F_2.\label{P1mu}\EDE
Note that $\mu\otimes\nu$ is well defined even if $\nu(\omega_1,\cdot)$ is not defined for $\omega_1$ in some $\mu$-null set.
Sometimes we write $\mu\rotimes \nu$ as $\mu(d\omega_1)\rotimes \nu(\omega_1,d\omega_2)$ to emphasize the dependence of $\nu$ on $\omega_1$.

We list a few simple facts.
\begin{itemize}
\item Let $P_1$ be the projection $(\omega_1,\omega_2)\mapsto \omega_1$. Then
\BGE P_1(\mu\rotimes \nu)=\nu(\cdot,\Omega_2)\cdot \mu\ll \mu. \label{P1mu*}\EDE 
\item If $f$ is a nonnegative measurable function on $\Omega_1$, then
\BGE f(\omega_1)\cdot \mu(d\omega_1) \rotimes \nu(\omega_1,d\omega_2)=\mu(d\omega_1) \rotimes f(\omega_1) \nu(\omega_1,d\omega_2).   \label{fdot}\EDE
\item Let $\mu$ be a $\sigma$-finite measure on $(\Omega_1,\F_1)$, and $\nu,\nu^\#$ be finite kernels from $(\Omega_1,\F_1)$ to $(\Omega_2,\F_2)$, which satisfy $ \nu(\omega_1,\cdot)=\nu(\omega_1,\Omega_2)\nu^\#(\omega_1,\cdot)$. Let $P_1$ be the projection  $(\omega_1,\omega_2)\mapsto \omega_1$. Then
   \BGE \mu\rotimes \nu=P_1(\mu\rotimes \nu)\rotimes \nu^\#.\label{P1linnu}\EDE
\item An $\Omega_1\times\Omega_2$-valued random element $(X,Y)$ satisfies that the conditional law of $Y$ given $X$ is $\nu(X,\cdot)$ iff the law $\mu$ of $(X,Y)$ satisfies
    \BGE \mu=P_1(\mu)(dx)\otimes \nu(x,dy).\label{conditional}\EDE
\end{itemize}

\subsection{Schramm-Loewner evolution}
We focus on chordal SLE, which is first defined using chordal Loewner equation. Below we provide a brief review (cf.\ \cite{Law1}). Let $W\in C([0,T);\R)$ for some $T\in (0,\infty]$. The chordal Loewner equation driven by $W$ is
\BGE \pa_t g_t(z)=\frac 2{g_t(z)-W_t},\quad g_0(z)=z.\label{Loewner}\EDE
For every $z\in \C$, there is $\tau(z)\in [0,T]$ such that $[0,\tau(z))$ is the biggest interval on which $t\mapsto g_t(z)$ is defined. Let $K_t=\{z\in\HH:\tau(z)\le t\}$, $0\le t<T$, where $\HH:=\{z\in\C:\Imm z>0\}$. The $g_t$ and $K_t$ are respectively called chordal Loewner maps and hulls driven by $W$. It turns out that, for each $t$, $g_t$ maps $\HH\sem K_t$ conformally onto $\HH$, and fixes $\infty$.

If we differentiate (\ref{Loewner}) w.r.t.\ $z$, then we get
\BGE \pa_t g_t'(z)=\frac {-2 g_t'(z)}{g_t(z)-W_t},\quad g_0'(z)=1.\label{Loewner'}\EDE

If the map $(t,z)\mapsto g_t^{-1}(z)$ extends continuously from $[0,T)\times \HH$ to $[0,T)\times \lin\HH$, then the continuous curve $\eta(t):=g_t^{-1}(W_t)$, $0\le t<T$, in $\lin\HH$, is called the chordal Loewner curve driven by $W$. Such $\eta$ may not exist in general. When it exists, then for any $t\in[0,T)$, $\HH\sem K_t$ is the unbounded component of $\HH\sem \eta[0,t]$.

Let $W_t=\sqrt\kappa B_t$, where $\kappa>0$ and $B_t$ is a standard Brownian motion. Then the chordal Loewner curve $\eta$ driven by $W$ is known to exist (\cite{RS}), and is called a chordal SLE$_\kappa$ in $\HH$ from $0$ to $\infty$. If $D$ is a simply connected domain with two distinct prime ends $a,b$, we may find a conformal map $f$ from $\HH$ onto $D$, which sends $0,\infty$ to $a,b$. Then $f\circ\eta$  modulo time parametrization (due to the non-uniqueness of $f$) is called a chordal SLE$_\kappa$ in $D$ from $a$ to $b$. Such $f\circ\eta$ lies in the prime end closure of $D$, and when $\pa D$ is locally connected, lies in $\lin D$.

An SLE$_\kappa$ curve is simple if $\kappa\in (0,4]$; space-filling if $\kappa\ge 8$; non-simple and non-space-filling if $\kappa\in (4,8)$. For the rest of the paper we assume $\kappa\in (0,8)$. The following proposition follows immediately from \cite[Lemma 3.6]{BPW-multiple}.

\begin{Proposition}
  Let $D'\subset D$ be two simply connected domains with locally connected boundary that share two distinct prime ends $a,b$.  Let $\PP$ be the law of  chordal SLE$_\kappa$ in $D$  from $a$ to $b$. Then $\PP[\eta\subset \lin{D'}]>0$.
  \label{Prop-abs}
\end{Proposition}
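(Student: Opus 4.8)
The plan is to transport the problem to $\HH$ by conformal invariance, use the sharing of prime ends to reduce the event $\{\eta\subset\lin{D'}\}$ to the avoidance of a \emph{compact} obstacle, and then realize that event by steering the driving Brownian motion so that the curve makes a loop that seals the obstacle off from $\infty$.

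\emph{Step 1 (reduction to a compact obstacle).} By conformal invariance of chordal SLE$_\kappa$ I may assume $D=\HH$, $a=0$, $b=\infty$, so $\PP$ is the law of a chordal SLE$_\kappa$ curve $\eta$ in $\HH$ from $0$ to $\infty$. The key point is that sharing a prime end with $\HH$ is a strong local condition: a defining chain of crosscuts of $D'$ at the prime end $0$ (resp.\ $\infty$) must also be a chain of crosscuts of $\HH$, which forces their endpoints onto $\R$. Hence $\pa D'$ coincides with $\R$ near $0$ and near $\infty$, and there are $0<r<R<\infty$ with $\{|z|<r\}\cap\HH\subset D'$ and $\{|z|>R\}\cap\HH\subset D'$. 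Therefore $A:=\lin{\HH\sem\lin{D'}}$ is a compact subset of $\lin\HH$ with $0,\infty\notin A$, and since $D'$ is simply connected every connected component of $A$ meets $\R$. As $\{\eta\cap A=\emptyset\}\subset\{\eta\subset\lin{D'}\}$, it suffices to prove $\PP[\eta\cap A=\emptyset]>0$. I emphasize that the compactness of $A$ is exactly what the prime-end hypothesis buys us: for a domain that only ``touches'' $0,\infty$ as points but does not fill a neighborhood of them (e.g.\ an infinite cone), the corresponding probability would in fact be $0$, so this reduction is where the hypothesis is essential.

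\emph{Step 2 (a loop that seals off $A$).} Because $A$ is compact, bounded away from $0,\infty$, and its complement $D'$ is connected and simply connected, there is a smooth simple arc $\beta$ from $0$ to a point $p\in(R,\infty)\subset\R$ with $\beta\sem\{0,p\}\subset D'\cap\HH$, chosen so that $A$ lies in the bounded region cut off by $\beta$ together with $[0,p]$; thus $\beta$ separates $A$ from $\infty$ in $\HH$. Since $D'$ contains half-disk neighborhoods of $0$ and of $p$, the arc $\beta$ has an open tubular neighborhood $T$ with $T\subset D'$. Now suppose $\eta$ stays inside $T$ until the first time $\tau$ that it reaches the endpoint region near $p$. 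On this event $\eta[0,\tau]\subset T\subset D'\subset\lin{D'}$, the hull $K_\tau$ contains $A$ while $\eta[0,\tau]\cap A=\emptyset$, and by the domain Markov property the continuation lies in $\HH\sem K_\tau\subseteq\HH\sem A\subseteq D'$. Consequently $\eta\subset\lin{D'}$ holds on this event, and no separate ``escape to $\infty$'' estimate is needed: the obstacle is sealed away.

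\emph{Step 3 (the steering estimate).} It remains to show $\PP[\eta\subset T \text{ until } \tau]>0$. This is a support-type theorem. The arc $\beta$, being a smooth simple curve, is generated by a continuous Loewner driving function $w$ on a time interval $[0,T_0]$. The driving function of $\eta$ is $W_t=\sqrt\kappa B_t$, and by the support theorem for Brownian motion in sup-norm, $\PP[\sup_{[0,T_0]}|\sqrt\kappa B_t-w(t)|<\del]>0$ for every $\del>0$. One then invokes continuity of the generated Loewner curve as a function of the driving function: for $\del$ small enough, uniform $\del$-closeness of the driving functions forces $\eta[0,T_0]$ to lie in a prescribed neighborhood of $\beta$, which we take to be contained in $T$.

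\emph{Main obstacle.} The genuine difficulty is the last step: for $\kappa\in(4,8)$ the curve is non-simple and may touch $\R$, so the continuous dependence of the trace (in the uniform metric) on the driving function, uniformly over the relevant tube, is the delicate input and must be justified carefully rather than quoted formally. A robust alternative that sidesteps fine curve-continuity is the finitary route suggested by the extremal-length plus interval-avoidance arguments available for SLE: grow $\eta$ into a thin vertical tongue from $0$ (staying in the free half-disk about $0$), apply the domain Markov property, and use that after mapping out a tall tongue the image of the compact set $A$ becomes a \emph{thin} boundary configuration whose avoidance probability can be pushed close to $1$ via the one-sided avoidance estimate $\PP[\eta\cap I=\emptyset]=f_\kappa(u/v)/f_\kappa(1)\to 1$ as the interval $I$ becomes thin. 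Either way, compactness of $A$ is what makes a \emph{finite} amount of steering suffice, and that is the crux of the argument.
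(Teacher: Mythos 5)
The paper itself disposes of this proposition by citing \cite[Lemma 3.6]{BPW-multiple}, so there is no in-text argument to match; judged on its own terms, your proposal contains a genuine gap in the step you designate as the main route. The claim in Step 2 that the hull $K_\tau$ contains $A$ once $\eta$ has followed the arc $\beta$ from $0$ to the endpoint region near $p$ is false for $\kappa\le 4$ and unjustified for $\kappa\in(4,8)$. For $\kappa\le 4$ the trace is a simple curve meeting $\R$ only at $0$, so $K_t=\eta[0,t]$ and $\HH\sem K_t$ is connected for all $t$: nothing is ever sealed off, and $A$ remains accessible from $\infty$ through the gap between the tip of $\eta$ and $\R$ near $p$. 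For $\kappa\in(4,8)$ the hull swallows $A$ only if the curve actually touches $\R$ at or beyond $p$ while still inside the tube $T$, which is not part of your construction and is itself an event whose positive probability would need proof. Without $K_\tau\supset A$, the continuation can enter the fjord and hit $A$, so the assertion that ``no separate escape estimate is needed'' collapses. Compounding this, Step 3 rests on continuity of the Loewner trace in the uniform norm of the driving function, which, as you yourself note, is precisely what fails (or at least is unavailable) in the regime $\kappa\in(4,8)$ that this paper needs; so neither half of the primary argument is complete. A smaller issue: in Step 1, sharing a prime end does not by itself force $\pa D'$ to coincide with $\R$ near $0$ and $\infty$ (a crosscut of $D'$ need not be a crosscut of $\HH$); the compactness of the obstacle is best taken as the intended reading of the hypothesis (that $D'$ contains $D$-neighborhoods of $a,b$, as in the paper's tubes) rather than derived.

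The fallback you sketch in one sentence at the end is, in fact, the correct argument, and is essentially how the cited lemma is proved: force the driving function to stay in a small interval up to the half-plane capacity of a tall thin rectangle (positive probability), which confines $\eta$ to that rectangle; by the comparison principle for extremal length, mapping down the resulting hull makes the images of the obstacle on each side of $0$ have cross-ratio close to $1$; then the domain Markov property and the one-sided avoidance probability $f_\kappa(u/v)/f_\kappa(1)\to 1$ finish the proof, combined with the absolute continuity of SLE in the crosscut domain with respect to SLE in $\HH$ on the avoidance event. That route uses only martingale computations and extremal length, and in particular never needs trace-level control of the Loewner map. As submitted, however, the proposal's main argument would fail and the viable alternative is not carried out.
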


\subsection{Multiple SLE}
Let $D$ be a simply connected domain with locally connected boundary and $2N$ distinct prime ends $a_1,b_1,\dots,a_N,b_N$. An $N$-tuple $(\eta_1,\dots,\eta_N)$ of random curves in $\lin D$ is called a multiple $N$-SLE$_\kappa$ in $D$ associated with $\alpha:=((a_1,b_1),\dots,(a_N,b_N))$ if, for any fixed $r\in\{1,\dots,N\}$, the conditional law of $\eta_r$ given all $\eta_j$, $j\ne r$, is an SLE$_\kappa$ curve from $a_r$ to $b_r$ in the connected component of $D\sem \bigcup_{j\ne r}\eta_j$ which contains neighborhoods of $a_r,b_r$ in $D$. A necessary condition for the existence of these curves is that there exist $N$ mutually disjoint simple curves $\gamma_1,\dots,\gamma_N$ in $D$ such that $\gamma_j$ connects $a_j$ with $b_j$, in which case $\alpha$ is called a link pattern.

By conformal invariance of SLE, one may assume that $\pa D$ is analytic at the points in $\alpha$. In the case $\kappa\le 4$, the  $N$-SLE$_\kappa$ associated with any link pattern  is known to exist (cf.\ \cite{Julien-Euler},\cite{KL-multiple},\cite{Wu-multiple}). The important idea  in \cite{KL-multiple} is that it is useful to view SLE$_\kappa$ as a non-probability measure, which is the SLE$_\kappa$ law (probability measure) multiplied by its partition function.

Let $\mu^D_{(a,b)}$  denote the  chordal SLE$_\kappa$ law in $D$ from $a$ to $b$.
Assume that $\pa D$ is analytic at $a,b$. The boundary Poisson kernel in $D$ at $(a,b)$ is  defined by $$P^D_{(a,b)}=\lim_{\eps\to 0^+} \frac{\pi}{\eps^2} G_D(a+\eps {\bf n}_a,b+\eps{\bf n}_b),$$
where $G_D(\cdot,\cdot)$ is the Dirichlet Green's function for $D$, and ${\bf n}_a,{\bf n}_b$ are inward normals. 

Define two constants depending on $\kappa$: \BGE \bb=\frac{6-\kappa}{2\kappa},\qquad \cc=\frac{(6-\kappa)(3\kappa-8)}{2\kappa},\label{bc}\EDE
which are respectively called the boundary scaling exponent and central charge of SLE$_\kappa$.
The $H^D_{(a,b)}:=(P^D_{(a,b)})^{\bb} $ and $Q^D_{(a,b)}:=H^D_{(a,b)}\mu^D_{ (a,b)}$ are respectively called the SLE$_\kappa$ partition function and SLE$_\kappa$ measure  for $(D;a,b)$.
The partition function satisfies the conformal covariance: if $f$ is a conformal map on $D$ that is extended in neighborhoods of $a,b$, then
\BGE H^D_{(a,b)}=|f'(a)|^{\bb}|f'(b)|^{\bb} H^{f(D)}_{(f(a),f(b))}.\label{covariance}\EDE
When $D=\HH$, we have the special value $H^{\HH}_{(a,b)}=|a-b|^{-2\bb}$. If $\kappa\le 6$, then $\bb\ge 0$, and $H^D_{(a,b)}$ satisfies the monotonicity: if $D'\subset D$ contains neighborhoods of $a,b$ in $D$, then $H^{D'}_{(a,b)}\le H^D_{(a,b)}$. The direction of the inequality  is reversed if $\kappa>6$.

Let $\alpha=((a_1,b_1),\dots,(a_N,b_N))$ be a link pattern in $D$. Assume that $\pa D$ is analytic at points in $\alpha$. Let $\kappa\in(0,4]$. The $N$-SLE$_\kappa$ measure in $D$ with link pattern $\alpha$ is defined as a measure $Q^D_\alpha$ defined by
\BGE Q^D_\alpha(d\ulin\eta)= Q^D_\alpha(d(\eta_1,\dots,\eta_N))=Y(\ulin\eta)\cdot \prod_{j=1}^N Q^D_{(a_j,b_j)}(d\eta_j),\label{Q}\EDE
where
$$ Y(\ulin\eta) =I(\ulin\eta)\exp\Big(\frac {\cc}2 \sum_{j=2}^N m_D(K_j(\ulin\eta))\Big),$$ 
$I(\ulin\eta)$ is the indicator of the event that $\eta_1,\dots,\eta_N$ are mutually disjoint, $\cc$ is given by (\ref{bc}), $m_D$ is the Brownian loop measure in $D$ (cf.\ \cite{Brownian-loop}), and $K_j(\ulin\eta)$ is the collection of loops that intersect at least $j$ of the curves $\eta_1,\dots,\eta_N$. Such $Q^D_\alpha$ is a finite measure, and its normalization is an $N$-SLE$_\kappa$ law in $D$ with link pattern $\alpha$.

The uniqueness of multiple SLE$_\kappa$ for $\kappa\le 4$ is established in \cite{BPW-multiple}, which also proves the existence and uniqueness of  multiple SLE$_{16/3}$ using the convergence of FK Ising model. The same argument works for multiple SLE$_6$ as scaling limit of critical percolation. The work  also studies  multiple SLE$_\kappa$ for other $\kappa\in(4,6]$ and proves the existence and uniqueness based on a conjecture about the convergence of some random cluster model. When there are only $2$ curves,  the existence and uniqueness of $2$-SLE$_\kappa$ hold for all $\kappa\in(0,8)$ (cf.\ \cite{HW-hSLE,MSW}).

\section{Existence}
The main goal of the paper is to prove the existence and uniqueness of multiple SLE$_\kappa$ associated with any given link pattern for $\kappa\in(4,8)$, in which case the curves are nonsimple and have positive probability to intersect. We do not consider the case $\kappa\ge 8$ because in that case the existence is trivial and the uniqueness does not hold.

If one uses (\ref{Q})  to construct $Q^D_\alpha$  for $\kappa\in (4,8)$, then by \cite[Lemma 2.3]{BPW-multiple} its normalization is not the law of multiple SLE$_\kappa$ but the law of $(\eta_1,\dots,\eta_N)$ with the properties that, every $\eta_r$ conditional on all other $\eta_j$'s is an SLE$_\kappa$ curve from $a_r$ to $b_r$ in some complement domain of the given curves {\it conditioned to avoid these curves}. 

Fortunately, \cite[Proposition 3.2]{KL-multiple} provides an cascade relation of $Q^D_\alpha$, which could be used for the construction.
It says that the marginal measure on $(\eta_1,\dots, \eta_{N-1})$ in $Q^D_\alpha$ is absolutely continuous w.r.t.\ $Q^D_{\alpha'}$, $\alpha'$ being the $\alpha$ without the link $(a_N,b_N)$,  with Radon-Nikodym derivative $H^{D'}_{(a_N,b_N)}$, where $D'$ is the connected component of $D\sem \cup_{k=1}^{N-1}\eta_k$ whose boundary contains $a_N,b_N$. Since the  $\eta_N$ in $Q^U_\alpha$ conditionally on $(\eta_1,\dots, \eta_{N-1})$ has the law of $\mu^{D'}_{(a_N,b_N)}$, we may obtain $Q^D_\alpha$  by first weighting $Q^D_{\alpha'}$ by $H^{D'}_{(a_N,b_N)}$ and then sampling $\eta_N$ according to $\mu^{D'}_{(a_N,b_N)}$.

We will follow the above approach to construct $Q^D_\alpha$ for $\kappa\in (4,8)$ inductively on $N$.
It is useful to construct multiple SLE in possibly disconnected open sets.
Let $\cal U$ denote the collection of open subsets of $\C$.
For $U\in\cal U$ and $N\in\N$, let $\CLP_N^U$ denote the set of tuples $\alpha =((a_1,b_1),\dots,(a_N,b_N))$ of $2N$ distinct points on $\pa U$, such that each of the $2N$ points lies on the boundary of a unique connected component of $U$, which is simply connected  with locally connected boundary and whose boundary is  analytic at this point. We write $\alpha_j$ for the couple $(a_j,b_j)$ or the set $\{a_j,b_j\}$. If there exist $N$ mutually disjoint simple curves $\gamma_j$, $1\le j\le N$, in $U$, such that each $\gamma_j$ connects points of $\alpha_j$, then $\alpha$ is called a link pattern of $U$ of size $N$. Let $\LP^U_N$ denote the collection of all such $\alpha$. We are going to construct a measure $Q^U_\alpha$ for any $\alpha\in\CLP^U_N$ on the space $X^U_\alpha:=\prod_{j=1}^N X^U_{\alpha_j}$, where $X^U_{\alpha_j}$ is the space of unparametrized curves from $a_j$ to $b_j$.

If $\alpha=(a,b)\in \LP^U_1$, there exists a unique connected simply connected component $D$ of $U$, whose boundary contains and is analytic at $a,b$. Then we define $\mu^U_\alpha=\mu^D_\alpha$, $H^U_\alpha=H^D_\alpha$, and $Q^U_\alpha=Q^D_\alpha$. Let $\mu^U_\alpha =H^U_\alpha =Q^D_\alpha=0$ if $\alpha\in \CLP^U_1\sem \LP^U_1$. 

Using  the above convention and (\ref{fdot}), in the case that $U$ is a simply connected domain $D$, we may express the cascade relation of $Q^U_\alpha$ for $\kappa\in(0,4]$   symbolically  as
 \BGE Q^U_\alpha =Q^U_{P_{\ha N}(\alpha)}(d(\eta_1,\dots \eta_{N-1}))\rotimes Q^{U\sem \bigcup_{j=1}^{N-1}\eta_j}_{\alpha_N}(d\eta_N), \label{cascade}\EDE
where $P_{\ha N}(\alpha)$ is the $\alpha$ without the $N$-th component.




We introduce some symbols. Let $\N_N=\{j\in\N:j\le N\}$ for $N\in\N$, and $S_N$ be the symmetric group of $\N_N$. For a vector $\ulin x=(x_1,\dots,x_N)$ and $j\in\N_N$, we define
$$ P_{\ha j}(\ulin x)=(x_1,\dots,\ha x_j,\dots,x_N),\quad P_{\le j}(\ulin x)=(x_1,\dots,x_j),$$ 
     where $\ha x_j$ means that $x_j$ is missing in the list. 
For $\sigma\in S_N$, let
$$ \sigma(\ulin x)=(x_{\sigma^{-1}(1)},\dots,x_{\sigma^{-1}(N)}).$$ 
Note that $\sigma_1(\sigma_2(\ulin x))=(\sigma_1\sigma_2)(\ulin x)$ for any $\sigma_1,\sigma_2\in S_N$.
We will view these $P_{\ha j}$, $P_{\le j}$ and $\sigma$   as functions on  $\CLP^U_N$ or on $X^U_\alpha$ for some $\alpha\in \CLP^U_N$.

\begin{Definition}
For any $\kappa\in (0,8)$, $U\in {\cal U}$ and $\alpha\in \CLP^U_N$, $N\in\N$, the $Q^U_\alpha$ is a $\sigma$-finite measure on $X^U_\alpha$ defined inductively on $N$ using the induction basis: the definition of $Q^U_{(a,b)}$ when $(a,b)\in \CLP^U_1$ and the induction step: (\ref{cascade}).
\label{QUalpha}
\end{Definition}

We remark that (\ref{cascade}) makes sense because from $\kappa<8$ we know that $\eta_1,\dots,\eta_{N-1}$ following the law $Q^U_{P_{\ha N}(\alpha)}$ a.s.\ do not pass through points of $\alpha_N$, and so $\alpha_N\in\CLP^{U\sem  \bigcup_{j=1}^{N-1}\eta_j}_1$. Note that $U\sem  \bigcup_{j=1}^{N-1}\eta_j$ is not connected, and $\alpha_N$ may not belong to $\LP^{U\sem  \bigcup_{j=1}^{N-1}\eta_j}_1$ even if $\alpha\in \LP^U_N$.

The $N$-SLE$_\kappa$ law could also be described using the language of kernels. For $j\le N$, we use $\sigma_{[j,N]}$ to denote the cyclic permutation $(j,j+1,\dots,N)\in S_N$. Then
        \BGE P_{\ha N} =P_{\ha j}\circ  \sigma_{[j,N]}. \label{P-N-j}\EDE
In view of (\ref{conditional}), it is easy to see that $(\eta_1,\dots,\eta_N)$  is   an $N$-SLE$_\kappa$ in $U$ with link pattern $\alpha$ iff its  law $\mu$, a probability measure on $X^U_\alpha$, satisfies that
\BGE \mu=\sigma_{[j,N]}\Big(P_{\ha j}(\mu)(d(\eta_1,\dots,\ha\eta_j,\dots,\eta_N))\rotimes \mu^{U\sem\cup_{k\ne j}\eta_k}_{(a_j,b_j)}(d\eta_j)\Big),\quad j\in\N_N. \label{N-SLE-measure}\EDE

\begin{Definition}
  A $\sigma$-finite measure $\mu$ on $X^U_\alpha$ is called a multiple SLE$_\kappa$ measure in $U$ with link pattern $\alpha$ if it satisfies (\ref{N-SLE-measure}). Clearly, if such $\mu$ is finite and nontrivial, then its normalization $\mu/|\mu|$ is an $N$-SLE$_\kappa$ law in $U$ with link pattern $\alpha$.
\end{Definition}

In the following we are going to show that $Q^U_\alpha$ satisfies (\ref{N-SLE-measure}) and study its trivialness and finiteness.
Let $\alpha^k=P_{\le k}(\alpha)$, $1\le k\le N$. Then $\alpha=\alpha^N$, and
\BGE Q^U_{\alpha^k} =Q^U_{\alpha^{k-1}}(d(\eta_1,\dots \eta_{k-1}))\rotimes Q^{U\sem \bigcup_{j=1}^{k-1}\eta_j}_{\alpha_k}(d\eta_k),\quad 2\le k\le N. \label{cascade2}
\EDE


\begin{Lemma}
  $Q^U_\alpha$ is not trivial iff $\alpha\in \LP^U_N$. \label{Lem-trivial}
\end{Lemma}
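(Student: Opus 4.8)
The plan is to argue by induction on $N$, using the cascade relation (\ref{cascade}) to reduce triviality of $Q^U_\alpha$ to a statement about $Q^U_{\alpha'}$, where $\alpha'=P_{\ha N}(\alpha)$. The induction basis is immediate from the definition of $Q^U_{(a,b)}$: for $(a,b)\in\CLP^U_1$ the measure $H^D_{(a,b)}\mu^D_{(a,b)}$ is nontrivial (since $H^D_{(a,b)}=(P^D_{(a,b)})^{\bb}>0$ and $\mu^D_{(a,b)}$ is a probability measure) exactly when $a,b$ lie on the boundary of a common component, i.e.\ when $(a,b)\in\LP^U_1$; otherwise it is $0$ by convention. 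For the inductive step, taking total masses in (\ref{cascade}) gives
\BGEN |Q^U_\alpha|=\int_{X^U_{\alpha'}} H^{U\sem\bigcup_{j=1}^{N-1}\eta_j}_{\alpha_N}\,dQ^U_{\alpha'}(\ulin\eta'), \EDEN
and since $H^{U'}_{\alpha_N}>0$ iff $\alpha_N\in\LP^{U'}_1$, the integrand is positive precisely on the event that $a_N,b_N$ lie on the boundary of a common simply connected component of $U\sem\bigcup_{j<N}\eta_j$. Thus $Q^U_\alpha$ is nontrivial iff $Q^U_{\alpha'}$ is nontrivial and assigns positive mass to this event.

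For the direction $\alpha\in\LP^U_N\Rightarrow Q^U_\alpha$ nontrivial, I would fix disjoint simple curves $\gamma_1,\dots,\gamma_N$ realizing $\alpha$ and enclose each $\gamma_j$, $j<N$, in a simply connected ``tube'' $V_j\subset U$ so that the $\lin{V_j}$ are pairwise disjoint, each disjoint from $\gamma_N$, and each $V_j$ shares the prime ends $a_j,b_j$ with its component and has analytic boundary there. Writing $E_k=\{\eta_j\subset\lin{V_j}:j\le k\}$, I would show $Q^U_{\alpha^k}[E_k]>0$ by induction on $k$ via (\ref{cascade2}): on $E_{k-1}$ the curve $\gamma_k$ lies in $U\sem\bigcup_{j<k}\lin{V_j}$, so $a_k,b_k$ lie on the component $D_k'$ of $U\sem\bigcup_{j<k}\eta_j$ containing $V_k$, whence $H^{D_k'}_{\alpha_k}>0$ and, by Proposition \ref{Prop-abs} applied to $V_k\subset D_k'$, $\mu^{D_k'}_{\alpha_k}[\eta_k\subset\lin{V_k}]>0$; integrating gives $Q^U_{\alpha^k}[E_k]>0$. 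On $E_{N-1}$ the arc $\gamma_N$ connects $a_N,b_N$ inside $U\sem\bigcup_{j<N}\eta_j$, so the integrand in the mass formula is positive there and $|Q^U_\alpha|>0$.

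For the converse I would argue the contrapositive: assume $\alpha\notin\LP^U_N$. If $\alpha'\notin\LP^U_{N-1}$ then $Q^U_{\alpha'}=0$ by the inductive hypothesis, hence $Q^U_\alpha=0$ by (\ref{cascade}). Otherwise $\alpha'\in\LP^U_{N-1}$, and it suffices to show that $Q^U_{\alpha'}$-a.s.\ the points $a_N,b_N$ do \emph{not} lie on a common simply connected component of $U\sem\bigcup_{j<N}\eta_j$, so that the integrand vanishes a.s.\ and $|Q^U_\alpha|=0$. This reduces to the topological statement that, whenever $\alpha'\in\LP^U_{N-1}$, the connectability of $a_N,b_N$ in the complement of \emph{any} admissible family realizing $\alpha'$ is equivalent to $\alpha\in\LP^U_N$.

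I expect this topological statement to be the main obstacle. I would isolate it as a lemma: for $\alpha\in\CLP^U_N$, the condition $\alpha\in\LP^U_N$ (existence of disjoint \emph{simple} curves) is equivalent to the existence of \emph{cascade-nested} curves $\eta_1,\dots,\eta_N$ realizing $\alpha$, where each $\eta_k$ lies in the closure of a single simply connected component of $U\sem\bigcup_{j<k}\eta_j$ having $a_k,b_k$ as prime ends. A realization produced by $Q^U_{\alpha'}$ is exactly such a nested family for $\alpha'$, and adjoining a simple arc in the common component (when it exists) extends it to a nested family for $\alpha$. The nontrivial content is that nested realizability is a purely combinatorial property of the cyclic order of the $2N$ marked points and the matching, independent of the (possibly non-simple) curves chosen; I would establish this by induction using the Jordan curve theorem, noting that $\eta_1$ separates $\lin U$ into regions whose boundary-point partition depends only on the cyclic position of $a_1,b_1$, that the remaining links must each fall into a single region, and that one thereby reduces to $N-1$ links in each simply connected piece. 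Equivalently, nested realizability holds iff the matching is non-crossing, and a non-crossing matching always admits disjoint simple curves. Care is needed to handle the disconnectedness of $U$ (the condition is checked component by component) and to confirm that $Q^U_{\alpha'}$-a.e.\ configuration is genuinely nested with simply connected complementary components carrying the correct prime ends, which follows from the a.s.\ behavior of SLE$_\kappa$ curves recorded after Definition \ref{QUalpha}.
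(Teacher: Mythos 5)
Your forward direction ($\alpha\in\LP^U_N\Rightarrow Q^U_\alpha$ nontrivial) is essentially the paper's argument: enclose disjoint simple curves realizing $\alpha$ in tubes with pairwise disjoint closures, apply Proposition \ref{Prop-abs} at each cascade step, and induct via (\ref{cascade2}). That part is fine, as is your observation that the total mass of $Q^U_\alpha$ is the $Q^U_{\alpha'}$-integral of $H^{U\sem\bigcup_{j<N}\eta_j}_{\alpha_N}$.

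The converse is where there is a genuine gap. You reduce it to the ``topological statement'' that, for $\alpha'\in\LP^U_{N-1}$, connectability of $a_N,b_N$ in the complement of \emph{any} admissible family realizing $\alpha'$ is \emph{equivalent} to $\alpha\in\LP^U_N$, and you propose to prove this by arguing that $\eta_1$ separates $\lin U$ into regions ``whose boundary-point partition depends only on the cyclic position of $a_1,b_1$.'' For $\kappa\in(4,8)$ the curves touch the boundary, and that claim is false: two marked points lying on the \emph{same} arc of $\pa D\sem\{a_1,b_1\}$ may or may not be separated by $\eta_1$, depending on whether $\eta_1$ swallows a boundary interval between them. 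Consequently the ``if'' half of your equivalence fails (a link $\alpha_N$ that crosses no other link can still be disconnected by a particular realization of $\alpha'$), and ``nested realizability is a purely combinatorial property of the matching'' is not available in this regime. What rescues the argument is that you only need the ``only if'' half, and for that only the true half of the separation dichotomy is required: if $\alpha'\in\LP^U_{N-1}$ but $\alpha\notin\LP^U_N$, then some $\alpha_k$ with $k<N$ interleaves with $\alpha_N$ on the boundary of a common component $D$, and \emph{any} connected set in $\lin D$ joining $a_k$ to $b_k$ separates the two complementary boundary arcs, hence separates $a_N$ from $b_N$. So the integrand in your mass formula vanishes for every configuration in the support of $Q^U_{\alpha'}$. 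This is exactly the fact the paper uses, except that the paper does not induct on $N$ at all for this direction: it locates a crossing pair $j<k$ directly, notes that the $k$-th cascade factor is a.s.\ zero, and concludes $0=Q^U_{\alpha^k}=\cdots=Q^U_\alpha$ from (\ref{cascade2}). You should replace your topological lemma by this one-directional separation statement and drop both the claimed equivalence and the claim that the induced boundary partition is combinatorially determined.
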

\begin{proof}
  If $\alpha\in\CLP^U_N\sem\LP^U_N$, then there are $j<k\in\N_N$ such that $\alpha_j,\alpha_k$ lie on the boundary of the same component $D$ of $U$, and if $\eta_j$ is an SLE$_\kappa$ in $\lin D$ connecting $\alpha_j$, then   $\eta_j$ disconnects points of $\alpha_k$ in $D$. Thus, $\alpha_k\not\in \LP_1^{U\sem \bigcup_{s=1}^{k-1} \eta_s}$, which implies that $Q^U_{\alpha^{k-1}}$-a.s.\ $Q^{U\sem \bigcup_{j=1}^{k-1}\eta_j}_{\alpha_k}=0$. By (\ref{cascade2}) we get $0=Q^U_{\alpha^k}=Q^U_{\alpha^{k+1}}=\cdots= Q^U_{\alpha^N}=Q^U_\alpha$.

Suppose $\alpha \in \LP^U_N$.
 We call  $T$ a tube in $U$ connecting  $\alpha_j $ if it is the closure of a Jordan subdomain of one component $D_j$ of $U$, whose boundary is a union of the closure of two crosscuts of $D_j$ and two disjoint open boundary arcs, which respectively contain points of $\alpha_j$. Since $\alpha\in \LP^U_N$, there exist mutually disjoint tubes $T_j$ in $U$ connecting $\alpha_j$, $1\le j\le N$. Let ${\cal T}_j=\{\eta\in X^U_j: \eta\subset T_j\}$. By Proposition \ref{Prop-abs}, $Q^U_{\alpha^1}({\cal T}_1)>0$ and for any $2\le k\le N$, when $(\eta_1,\dots,\eta_{k-1})\in \prod_{j=1}^{k-1} {\cal T}_j$, $Q^{U\sem \bigcup_{j=1}^{k-1}\eta_j}_{\alpha_k}({\cal T}_k)>0$. Thus, by (\ref{cascade}) and induction, $Q^U_{\alpha^k}(\prod_{j=1}^k {\cal T}_j)>0$ for $1\le k\le N$, which means that  $Q^U_\alpha=Q^U_{\alpha^N}$ is not trivial.
\end{proof}

\begin{Lemma}
  If $\kappa\le 6$, then $Q^U_\alpha$ is finite. \label{Lem-finite}
\end{Lemma}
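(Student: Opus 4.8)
The plan is to bound the total mass $|Q^U_\alpha|$ by inducting along the chain $\alpha^1,\dots,\alpha^N=\alpha$ via the cascade relation (\ref{cascade2}), using the monotonicity of the boundary partition function that is available precisely when $\kappa\le 6$. First I would record that $\mu^D_{(a,b)}$ is a probability measure, so that for a single link the total mass of the one-link measure equals its partition function: $|Q^{D}_{(a,b)}|=H^{D}_{(a,b)}$, and more generally $|Q^U_{(a,b)}|=H^U_{(a,b)}$ with the convention $H^U=0$ off $\LP^U_1$. In particular the base case $|Q^U_{\alpha^1}|=H^U_{\alpha_1}<\infty$ holds because $\alpha_1$ lies on the boundary of a fixed component $D_1$ of $U$.

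For the inductive step, fix $2\le k\le N$ and write (\ref{cascade2}) as $Q^U_{\alpha^k}=Q^U_{\alpha^{k-1}}\rotimes\nu_k$, where $\nu_k$ is the finite kernel $\nu_k(\eta_1,\dots,\eta_{k-1};\cdot)=Q^{U\sem\bigcup_{j=1}^{k-1}\eta_j}_{\alpha_k}(\cdot)$. Taking total masses in (\ref{P1mu*}) and using that the relevant one-link SLE law is a probability measure gives $|Q^U_{\alpha^k}|=\int H^{D'}_{\alpha_k}\,dQ^U_{\alpha^{k-1}}$, where $D'=D'(\eta_1,\dots,\eta_{k-1})$ is the connected component of $U\sem\bigcup_{j=1}^{k-1}\eta_j$ whose boundary contains $\alpha_k$ (and $H^{D'}_{\alpha_k}=0$ when $\alpha_k$ is not a link there). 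The key point is then that for $\kappa\le 6$ the monotonicity of $H$ yields the pointwise bound $H^{D'}_{\alpha_k}\le H^U_{\alpha_k}$: since the curves a.s.\ do not pass through $a_k,b_k$ (as $\kappa<8$), the domain $D'$ is a subdomain of the fixed component $D_k$ of $U$ carrying $\alpha_k$ and contains neighborhoods of $a_k,b_k$ in $D_k$. Hence $|Q^U_{\alpha^k}|\le H^U_{\alpha_k}\,|Q^U_{\alpha^{k-1}}|$, and iterating gives $|Q^U_\alpha|\le\prod_{k=1}^N H^U_{\alpha_k}<\infty$.

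The only delicate point, and the one I would write out with care, is the geometric claim that the relevant component $D'$ both lies inside $D_k$ and retains neighborhoods of the two marked points $a_k,b_k$, so that the monotonicity hypothesis genuinely applies. This rests on two facts already at hand: that $\alpha_k\in\CLP^{U\sem\bigcup_{j=1}^{k-1}\eta_j}_1$ almost surely (the remark following Definition \ref{QUalpha}, valid because $\kappa<8$ prevents the curves from hitting the marked points), so $D'$ is a simply connected component with the required analytic boundary regularity at $a_k,b_k$; and that adjoining curves only shrinks the ambient component, so $D'\subset D_k$. I would also note that the measurability and $\sigma$-finiteness needed to treat $\nu_k$ as a finite kernel and to invoke (\ref{P1mu*}) are exactly what Definition \ref{QUalpha} already guarantees, so no extra work is required there. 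Finally, the argument uses $\kappa\le 6$ only through the direction of the monotonicity inequality; for $\kappa>6$ the same computation produces a lower bound rather than an upper bound, which is why finiteness is not obtained in that range.
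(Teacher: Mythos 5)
Your proposal is correct and follows essentially the same route as the paper: the paper's proof is exactly the one-line iteration $|Q^U_{\alpha^k}|\le |Q^U_{\alpha^{k-1}}|\,H^U_{(a_k,b_k)}$ obtained from the cascade relation together with the monotonicity of $H^D_{(a,b)}$ for $\kappa\le 6$, yielding $|Q^U_\alpha|\le\prod_{k=1}^N H^U_{(a_k,b_k)}<\infty$. You merely spell out the geometric justification for applying monotonicity (that the relevant component $D'$ sits inside the component carrying $\alpha_k$ and retains neighborhoods of its endpoints), which the paper leaves implicit.
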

\begin{proof}
By (\ref{cascade}) and the monotonicity of $H^D_{(a,b)}$ when $\kappa\le 6$,
$$|Q^U_{\alpha^k}|\le \int Q^U_{\alpha^{k-1}}(d(\eta_1,\dots,\eta_{k-1})) |Q^U_{(a_k,b_k)}|=|Q^U_{\alpha^{k-1}}| H^U_{(a_k,b_k)},\quad 2\le k\le N,$$
which implies that $|Q^U_\alpha|\le \prod_{k=1}^N H^U_{(a_k,b_k)}<\infty$.
\end{proof}

The following lemma follows from Wu's work \cite{HW-hSLE}.

\begin{Lemma}
For any   $\alpha\in \CLP^U_2$, $Q^U_\alpha$ is finite, and
$ \sigma (Q^U_{\alpha}) =Q^U_{\sigma(\alpha)}$ for  $\sigma=(1,2)\in S_2$. 
\label{2-commute}
\end{Lemma}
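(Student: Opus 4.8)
The plan is to split according to the component structure of $\alpha$, dispose of the degenerate and product cases by hand, reduce the genuinely interacting case to $\HH$ by conformal invariance, and then import the $2$-SLE commutation relation from \cite{HW-hSLE}. First, if $\alpha\in\CLP^U_2\sem\LP^U_2$, then $Q^U_\alpha$ is trivial by Lemma \ref{Lem-trivial}; since relabeling the two links does not affect the existence of disjoint connecting curves, $\sigma(\alpha)\in\LP^U_2$ iff $\alpha\in\LP^U_2$, so $Q^U_{\sigma(\alpha)}$ is trivial as well and the identity reads $0=0$. Hence I may assume $\alpha\in\LP^U_2$, in which case each $\alpha_j$ has both endpoints on a single component of $U$. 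If $\alpha_1,\alpha_2$ sit on two distinct components $D_1,D_2$, then a curve $\eta_1$ sampled from $Q^U_{\alpha_1}$ lies in $\lin{D_1}$, which is disjoint from the open set $D_2$; thus $D_2$ is still a component of $U\sem\eta_1$ and $Q^{U\sem\eta_1}_{\alpha_2}=Q^{D_2}_{\alpha_2}$ does not depend on $\eta_1$. By (\ref{cascade}) the measure factorizes as $Q^U_\alpha=Q^{D_1}_{\alpha_1}\otimes Q^{D_2}_{\alpha_2}$, a product of two finite measures, so it is finite, and $\sigma(Q^U_\alpha)=Q^{D_2}_{\alpha_2}\otimes Q^{D_1}_{\alpha_1}=Q^U_{\sigma(\alpha)}$ is immediate.

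It remains to handle the case where $\alpha_1,\alpha_2$ both lie on the boundary of a single simply connected component $D$ of $U$. Since no marked point lies on any other component, the cascade never produces a curve outside $\lin D$, so $Q^U_\alpha=Q^D_\alpha$. Next I would record that the cascade construction is conformally covariant: conformal invariance of SLE gives $f(\mu^D_{(a,b)})=\mu^{f(D)}_{(f(a),f(b))}$, (\ref{covariance}) supplies the scalar factor for $H$, and feeding these through (\ref{cascade}) yields, for a conformal $f:D\to\HH$, $f(Q^D_\alpha)=\prod_{j=1}^2|f'(a_j)|^{\bb}|f'(b_j)|^{\bb}\,Q^{\HH}_{f(\alpha)}$. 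The scalar constant is symmetric under exchanging the two links and $f$ is a bijection, so both finiteness and the symmetry descend from the corresponding statements for $Q^{\HH}_{f(\alpha)}$. I may therefore take $U=D=\HH$ with $a_1,b_1,a_2,b_2\in\R$ admitting disjoint connecting arcs.

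Unwinding (\ref{cascade}) with the help of (\ref{fdot}) and (\ref{P1mu*}), the $\eta_1$-marginal of $Q^{\HH}_\alpha$ is $H^{\HH}_{\alpha_1}H^{\HH\sem\eta_1}_{\alpha_2}\cdot\mu^{\HH}_{\alpha_1}(d\eta_1)$, a chordal SLE$_\kappa$ for the first link reweighted by the partition function of the second link in the slit domain, while conditionally on $\eta_1$ the curve $\eta_2$ has law $\mu^{\HH\sem\eta_1}_{\alpha_2}$. Thus $Q^{\HH}_\alpha$ satisfies (\ref{N-SLE-measure}) for $j=2$ by construction, and the claim $\sigma(Q^{\HH}_\alpha)=Q^{\HH}_{\sigma(\alpha)}$ is equivalent to $Q^{\HH}_\alpha$ also satisfying (\ref{N-SLE-measure}) for $j=1$, i.e.\ to the statement that conditionally on $\eta_2$ the curve $\eta_1$ is a chordal SLE$_\kappa$ in its complementary component. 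This is exactly the commutation relation for $2$-SLE$_\kappa$.

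Finally I would invoke \cite{HW-hSLE}. For every $\kappa\in(0,8)$ and every admissible configuration, Wu constructs a finite two-curve measure $M$ whose conditional law of either curve given the other is chordal SLE$_\kappa$ in the corresponding complementary component, and identifies the reweighted marginal above as a hypergeometric SLE with an explicit, finite, symmetric partition function. Matching the $j=2$ conditional law together with the $\eta_1$-marginal identifies $Q^{\HH}_\alpha=M$ through (\ref{conditional}), and the same matching applied in the reversed order gives $Q^{\HH}_{\sigma(\alpha)}=\sigma(M)$; together these yield both finiteness and $\sigma(Q^{\HH}_\alpha)=Q^{\HH}_{\sigma(\alpha)}$. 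I expect the only substantial obstacle to be this last input: the commutation relation is genuinely two-dimensional information that is not forced by the cascade, and for $\kappa\in(6,8)$ the finiteness is not covered by the monotonicity argument of Lemma \ref{Lem-finite}, so both rest on Wu's explicit hypergeometric partition function.
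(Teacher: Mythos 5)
Your proposal is correct and follows essentially the same route as the paper: the same split into the degenerate, product, and single-component cases, the same reduction to a reference domain by conformal covariance, and the same two inputs from \cite{HW-hSLE} (the uniformly integrable martingale giving $\int \mu^{\HH}_{(0,\infty)}(d\gamma)\,H^{\HH\sem\gamma}_{(x,y)}=H^{\HH}_{(x,y)}G(r)<\infty$ for finiteness, and the existence/uniqueness of $2$-SLE$_\kappa$ together with its marginal description to identify $Q^U_\alpha=H^U_\alpha\,\mu^U_\alpha$ and deduce the symmetry from that of the cross-ratio). The only slip is the parenthetical claim that $\sigma(Q^{\HH}_\alpha)=Q^{\HH}_{\sigma(\alpha)}$ is \emph{equivalent} to the $j=1$ case of (\ref{N-SLE-measure}) --- one also needs the $\eta_2$-marginal and the total masses to match --- but your final paragraph, which identifies $Q^{\HH}_\alpha$ with a constant multiple of Wu's two-curve measure and uses the symmetry of the explicit partition function, supplies exactly what is needed, just as the paper does.
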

\begin{proof}
If $\alpha\not\in\LP^U_2$, then $\sigma(\alpha)\not\in\LP^U_2$. By Lemma \ref{Lem-trivial}, $Q^U_\alpha=Q^U_{\sigma(\alpha)}=0$.
Suppose now $\alpha \in\LP^U_2$. For $j=1,2$, there is a simply connected component $D_j$ of $U$ whose boundary contains $\alpha_j$. Then $Q^U_{\alpha_1}=Q^{D_1}_{\alpha_1}$. If $D_1\ne D_2$, then for $Q^{D_1}_{\alpha_1}$-a.s.\ every $\eta_1$, $Q^{U\sem \eta_1}_{\alpha_2}=Q^{D_2}_{\alpha_2}$. Thus, by (\ref{cascade}), $Q^U_{\alpha}=Q^{D_1}_{\alpha_1}\times Q^{D_2}_{\alpha_2}$. Similarly, $Q^U_{\sigma (\alpha)}=Q^{D_2}_{\alpha_2}\times Q^{D_1}_{\alpha_1}$. So $ \sigma (Q^U_{\alpha}) =Q^U_{\sigma(\alpha)}$.

Now consider the case $D_1=D_2$. We may assume $U=D_1 $ since other components of $U$ are irrelevant.  By reversibility of chordal SLE$_\kappa$ and the fact that $H^U_{(a,b)}=H^U_{(b,a)}$, we may relabeling the points of $\alpha_j=\{a_j,b_j\}$, $j=1,2$, such that $a_1,b_1,b_2,a_2$ are ordered clockwise or counterclockwise on $\pa U$. Let $H^U_\alpha=|Q^U_\alpha|$.
By (\ref{cascade}) and (\ref{P1mu}),
\BGE H^U_\alpha=H^U_{\alpha_1}\int \mu^U_{\alpha_1}(d\eta_1) H^{U\sem \eta_1}_{\alpha_2}.\label{H1}\EDE  Let $f$ be a conformal map from $U$ onto $\HH$ such that $f(a_1)=0$ and $f(b_1)=\infty$. Let $x=f(a_2)$ and $y=f(b_2)$. By conformal covariance, \BGE \int \mu^U_{\alpha_1}(d\eta_1) H^{U\sem \eta_1}_{\alpha_2} =|f'(a_2)|^{\bb}|f'(b_2)|^{\bb} \int \mu^{\HH}_{(0,\infty)}(d\gamma) H^{\HH\sem \gamma}_{(x,y)}.\label{H2}\EDE
Now $r:=x/y\in (0,1)$ is the cross ratio of $\alpha$ in $U$.
By \cite[Lemma 3.5 and Proposition 3.6]{HW-hSLE}, if $\gamma$ is SLE$_\kappa$ in $\HH$ from $0$ to $\infty$, there is an associated uniformly integrable martingale $(M_t)_{t\ge 0}$ satisfying
$$M_0=|x-y|^{-2\bb}r^{\frac 2\kappa}F(r),\quad M_\infty=H^{\HH\sem \gamma}_{(x,y)},$$
where $F:=_2F_1(\frac 4\kappa,1-\frac 4\kappa,\frac 8\kappa,\cdot)$ is a hypergeometric function. Let $G(x)=x^{\frac 2\kappa}F(x)$. Then
$$\int \mu^{\HH}_{(0,\infty)}(d\gamma) H^{\HH\sem \gamma}_{(x,y)}=\EE[M_\infty]=M_0=H^{\HH}_{(x,y)}G(r),$$
which combined with (\ref{H1},\ref{H2}) and the conformal covariance of $H^U_{\alpha_j}$, $j=1,2$, implies that
$H^U_\alpha=G(r )H^U_{\alpha_1}H^U_{\alpha_2}\in (0,\infty) $. Thus, $Q^U_\alpha$ is finite.
Since the cross ratio of  $\alpha$ and $\sigma(\alpha)$ in $U$ agree, we get $H^U_\alpha=H^U_{\sigma(\alpha)}$.

By \cite[Proposition 6.10]{HW-hSLE}, for any simply connected domain $D$ and $\beta\in\LP^D_2$, the $2$-SLE$_\kappa$ law associated with $\beta$, denoted by $\mu^D_\beta$,  exists  uniquely and satisfies conformal invariance. This immediately implies that $\sigma(\mu^U_\alpha)=\mu^U_{\sigma(\alpha)}$ and $\mu^U_\alpha=f^{-1}(\mu^{\HH}_{((0,\infty),(x,y))})$.
That proposition and \cite[Proposition 3.6]{HW-hSLE} together imply that the first marginal of $\mu^{\HH}_{((0,\infty),(x,y))}$ is absolutely continuous w.r.t.\ $\mu^{\HH}_{(0,\infty)}$ with Radon-Nikodym derivative $M_\infty/M_0=H^{\HH\sem\gamma}_{(x,y)}/(H^{\HH}_{(x,y)} G(r))$. Thus, by (\ref{conditional}),
$$\mu^{\HH}_{((0,\infty),(x,y))}(d(\gamma_1,\gamma_2))=H^{\HH\sem\gamma_1}_{(x,y)}/(H^{\HH}_{(x,y)} G(r)) \cdot \mu^{\HH}_{(0,\infty)}(d\gamma_1)\rotimes \mu^{\HH\sem \gamma_1}_{(x,y)}(d\gamma_2).$$
By this formula and the conformal invariance of SLE$_\kappa$ and $2$-SLE$_\kappa$,
$$ \mu^U_\alpha(d(\eta_1,\eta_2))=H^{\HH\sem f(\eta_1)}_{(x,y)}/(H^{\HH}_{(x,y)} G(r)) \cdot \mu^{U}_{\alpha_1}(d\eta_1)\rotimes \mu^{U\sem \eta_1}_{\alpha_2}(d\eta_2).$$
By the construction of $Q^U_\alpha$ and (\ref{fdot}),
$$Q^U_\alpha(d(\eta_1,\eta_2))=H^U_{\alpha_1} H^{U\sem \eta_1}_{\alpha_2}\cdot \mu^U_{\alpha_1}(d\eta_1)\rotimes \mu^{U\sem \eta_1}_{\alpha_2}(d\eta_2).$$ 
Comparing the above two displayed formulas and using $H^{U\sem \eta_1}_{\alpha_2}/{H^{\HH\sem f(\eta_1)}_{(x,y)}}=H^U_{\alpha_2}/H^{\HH}_{(x,y)}$ and $H^U_\alpha=G(r )H^U_{\alpha_1}H^U_{\alpha_2}$, we get $Q^U_\alpha=H^U_\alpha\mu^U_\alpha$, which together with  $H^U_\alpha=H^U_{\sigma(\alpha)}$ and $\sigma(\mu^U_\alpha)=\mu^U_{\sigma(\alpha)}$ implies  $ \sigma (Q^U_{\alpha}) =Q^U_{\sigma(\alpha)}$.
\end{proof}

\begin{Lemma}
  For any $\alpha\in \CLP^U_N$ and $\sigma\in S_N$, $\sigma(Q^U_\alpha)=Q^U_{\sigma(\alpha)}$.
  \label{commute}
\end{Lemma}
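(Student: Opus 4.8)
The plan is to induct on $N$, with Lemma \ref{2-commute} serving as the engine for the only genuinely new interchange. Since pushforward is functorial — $\sigma_1(\sigma_2(\mu))=(\sigma_1\sigma_2)(\mu)$, because the relation $\sigma_1(\sigma_2(\ulin x))=(\sigma_1\sigma_2)(\ulin x)$ lifts to the maps $\ulin\eta\mapsto\sigma(\ulin\eta)$ — and since I will establish the identity $\sigma(Q^U_\alpha)=Q^U_{\sigma(\alpha)}$ for \emph{every} $\alpha\in\CLP^U_N$, it suffices to verify it for a generating set of $S_N$. I take as generators the subgroup $S_{N-1}\le S_N$ of permutations fixing $N$ together with the transposition $\tau=(N-1,N)$; these generate $S_N$ (conjugating $\tau$ by $S_{N-1}$ produces all $(i,N)$, which already generate $S_N$). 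The case $\alpha\notin\LP^U_N$ is trivial, since then $\sigma(\alpha)\notin\LP^U_N$ too and both measures vanish by Lemma \ref{Lem-trivial}; so I assume $\alpha\in\LP^U_N$. The base $N\le 2$ is immediate, the single nontrivial generator for $N=2$ being exactly Lemma \ref{2-commute}.

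For $\sigma\in S_{N-1}$ (fixing $N$): such $\sigma$ acts on $X^U_\alpha$ as $g\times\id$, permuting only the first $N-1$ curves. Writing the cascade (\ref{cascade}) as $Q^U_\alpha=Q^U_{P_{\ha N}(\alpha)}(d\ulin\eta_{<N})\rotimes\nu(\ulin\eta_{<N},d\eta_N)$ with $\nu(\ulin\eta_{<N},\cdot)=Q^{U\sem\bigcup_{j<N}\eta_j}_{\alpha_N}$, the key point is that $\nu$ depends on $\ulin\eta_{<N}$ only through the union $\bigcup_{j<N}\eta_j$, which is invariant under reordering. A direct computation of the pushforward of $\rotimes$ under a map $g\times\id$ (checked on product sets) then gives $\sigma(Q^U_\alpha)=g(Q^U_{P_{\ha N}(\alpha)})\rotimes\nu$. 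By the induction hypothesis for $N-1$, $g(Q^U_{P_{\ha N}(\alpha)})=Q^U_{P_{\ha N}(\sigma(\alpha))}$, while $\nu$ is unchanged because $\alpha_N=(\sigma(\alpha))_N$; comparing with the cascade for $\sigma(\alpha)$ yields $\sigma(Q^U_\alpha)=Q^U_{\sigma(\alpha)}$.

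For the swap $\tau=(N-1,N)$: the idea is to group the last two cascade steps into a single $2$-SLE kernel and invoke Lemma \ref{2-commute} fiberwise. Using (\ref{cascade2}) twice together with the associativity of $\rotimes$, and writing $V:=U\sem\bigcup_{j\le N-2}\eta_j$, I replace the last two steps $Q^V_{\alpha_{N-1}}(d\eta_{N-1})\rotimes Q^{V\sem\eta_{N-1}}_{\alpha_N}(d\eta_N)$ by the single kernel $Q^V_{(\alpha_{N-1},\alpha_N)}$ — this is precisely the definition (\ref{cascade}) of the $2$-SLE measure in the possibly disconnected domain $V$ — to obtain
$$Q^U_\alpha=Q^U_{\alpha^{N-2}}(d\ulin\eta_{\le N-2})\rotimes Q^{V}_{(\alpha_{N-1},\alpha_N)}(d(\eta_{N-1},\eta_N)).$$
Since $\tau$ acts on $X^U_\alpha$ as $\id\times s$, with $s$ the swap of the last two coordinates, the pushforward of $\rotimes$ under $\id\times s$ acts fiberwise: $\tau(Q^U_\alpha)=Q^U_{\alpha^{N-2}}\rotimes s(Q^V_{(\alpha_{N-1},\alpha_N)})$. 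By Lemma \ref{2-commute}, $s(Q^V_{(\alpha_{N-1},\alpha_N)})=Q^V_{(\alpha_N,\alpha_{N-1})}$; as $\alpha^{N-2}=(\tau(\alpha))^{N-2}$ and $(\alpha_N,\alpha_{N-1})$ are the last two couples of $\tau(\alpha)$, the right-hand side is exactly the grouped cascade for $\tau(\alpha)$, so $\tau(Q^U_\alpha)=Q^U_{\tau(\alpha)}$.

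The crux is the regrouping in the $\tau$ step, which needs the associativity of the $\rotimes$ operation and the fact that $V\mapsto Q^V_{(\alpha_{N-1},\alpha_N)}$ is a genuine finite kernel. Its pointwise finiteness is exactly the content of Lemma \ref{2-commute} for the disconnected set $V$ — which is why that lemma was stated for all of $\CLP^U_2$ rather than only for connected $U$ — and its measurability is part of the kernel framework underlying Definition \ref{QUalpha}. The two pushforward computations ($g\times\id$ and $\id\times s$) reduce to checking the defining identity (\ref{P1mu}) on product sets and appealing to uniqueness in Carath\'eodory's extension, so I expect no difficulty there beyond bookkeeping.
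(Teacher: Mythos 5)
Your proof is correct and follows essentially the same strategy as the paper: both arguments reduce to a generating set of $S_N$, handle the swap of the last two indices by regrouping the final two cascade steps into a single $2$-SLE kernel and invoking Lemma \ref{2-commute} on the possibly disconnected set $V$, and transport permutations of earlier coordinates using the fact that each subsequent cascade kernel depends on the preceding curves only through their union. The only difference is organizational — you induct on $N$ with generators $S_{N-1}\cup\{(N-1,N)\}$, whereas the paper works with the adjacent transpositions $(k-1,k)$ applied at level $k$ and then lifted to level $N$ — and this does not change the substance of the argument.
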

\begin{proof}
By Lemma \ref{2-commute}, the statement holds for $N=1,2$. Suppose $N\ge 3$.
  By (\ref{cascade2}),
  $$Q^U_\alpha=Q^U_{\alpha^{N-2}}(d(\eta_1,\dots,\eta_{N-2}))\rotimes Q^{U\sem \bigcup_{j=1}^{N-2} \eta_j}_{(\alpha_{N-1},\alpha_N)}(d(\eta_{N-1},\eta_N)).$$
  By Lemma \ref{2-commute}, for $\sigma_{N}:=(N-1,N)\in S_N$, $\sigma_{N}(Q^U_\alpha)=Q^U_{\sigma_{N}(\alpha)}$. This result applied to $\alpha^k$ and $\sigma_k=(k-1,k)\in S_k$ together with Lemma \ref{2-commute} implies that $\sigma_{k}(Q^U_{\alpha^k})=Q^U_{\sigma_{k}(\alpha^k)}$, $2\le k\le N$. We view $\sigma_k$ as an element of $S_m$ for any $k\le m\le N$. By (\ref{cascade2}) and induction, we get $\sigma_k(Q^U_{\alpha^m})=Q^U_{\sigma_k(\alpha^m)}$ for $k\le m\le N$. Taking $m=N$, we get $\sigma_k(Q^U_\alpha)=Q^U_{\sigma_k(\alpha)}$. The proof now finishes since $\{\sigma_k:2\le k\le N\}$ generates the group $S_N$.
\end{proof}

\begin{Lemma}
For $\alpha\in \LP^U_N$, $Q^U_\alpha$ is an $N$-SLE$_\kappa$ measure in $U$ with link pattern $\alpha$. \label{Thm}
\end{Lemma}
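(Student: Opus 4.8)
The goal is to verify that $\mu=Q^U_\alpha$ satisfies the system (\ref{N-SLE-measure}) for every $j\in\N_N$. If $\alpha\notin\LP^U_N$ then $Q^U_\alpha=0$ by Lemma \ref{Lem-trivial} and each identity holds trivially, so I may assume $\alpha\in\LP^U_N$; in fact the argument below works verbatim for any $\alpha\in\CLP^U_N$. The plan is to first settle the distinguished index $j=N$, where $\sigma_{[N,N]}=\id$ and (\ref{N-SLE-measure}) is nothing but the cascade relation read through the factorization $Q=H\mu$, and then to obtain a general $j$ by relabelling via the cyclic permutation $\sigma_{[j,N]}$ and invoking the symmetry Lemma \ref{commute}.

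For $j=N$, start from the defining cascade relation (\ref{cascade}),
\[
Q^U_\alpha=Q^U_{P_{\ha N}(\alpha)}(d(\eta_1,\dots,\eta_{N-1}))\rotimes Q^{U\sem\bigcup_{k=1}^{N-1}\eta_k}_{\alpha_N}(d\eta_N).
\]
By the single-link definition, the kernel $\nu(\cdot)=Q^{U\sem\bigcup_{k=1}^{N-1}\eta_k}_{\alpha_N}$ factors as $\nu(\cdot)=H^{U\sem\bigcup_{k=1}^{N-1}\eta_k}_{\alpha_N}\,\mu^{U\sem\bigcup_{k=1}^{N-1}\eta_k}_{\alpha_N}(\cdot)$, i.e.\ its total mass is the partition function and its normalization $\nu^\#$ is the SLE$_\kappa$ probability kernel $\mu^{U\sem\bigcup_{k=1}^{N-1}\eta_k}_{\alpha_N}$. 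Applying the kernel identity (\ref{P1linnu}) with this $\nu$ and $\nu^\#$, and noting $P_1(Q^U_\alpha)=P_{\ha N}(Q^U_\alpha)$, gives
\[
Q^U_\alpha=P_{\ha N}(Q^U_\alpha)\rotimes \mu^{U\sem\bigcup_{k\ne N}\eta_k}_{(a_N,b_N)},
\]
which is exactly (\ref{N-SLE-measure}) at $j=N$. The places where the weight $H^{U\sem\bigcup_{k}\eta_k}_{\alpha_N}$ vanishes are harmless: by (\ref{P1mu*}) the marginal $P_{\ha N}(Q^U_\alpha)$ is itself $H$-weighted, so it charges no such configuration, and (\ref{P1linnu}) needs only the factorization $\nu=\nu(\cdot,\Omega_2)\nu^\#$, not that $\nu^\#$ be a probability kernel on a null set.

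For general $j$, fix $\sigma:=\sigma_{[j,N]}$ and set $\alpha':=\sigma^{-1}(\alpha)$. A direct check of the cycle $(j,j+1,\dots,N)$ shows $\alpha'_N=\alpha_j$, that the first $N-1$ entries of $\alpha'$ are $(\alpha_1,\dots,\ha\alpha_j,\dots,\alpha_N)$, and that $\sigma(\alpha')=\alpha$. Applying the already-proved case $j=N$ to $\alpha'$ yields
\[
Q^U_{\alpha'}=P_{\ha N}(Q^U_{\alpha'})\rotimes \mu^{U\sem\bigcup_{k=1}^{N-1}\eta_k}_{\alpha'_N}.
\]
Since $\alpha'_N=\alpha_j$, the conditioning domain $U\sem\bigcup_{k=1}^{N-1}\eta_k$ equals $U\sem\bigcup_{k\ne j}\eta_k$ (the first $N-1$ $\alpha'$-curves are a relabelling of the $\eta_k$, $k\ne j$), and by the relation $P_{\ha N}=P_{\ha j}\circ\sigma_{[j,N]}$ from (\ref{P-N-j}) together with Lemma \ref{commute} we have $P_{\ha N}(Q^U_{\alpha'})=P_{\ha j}(\sigma(Q^U_{\alpha'}))=P_{\ha j}(Q^U_\alpha)$. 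Substituting, the right-hand side above is exactly the measure $P_{\ha j}(Q^U_\alpha)\rotimes\mu^{U\sem\bigcup_{k\ne j}\eta_k}_{(a_j,b_j)}$ appearing inside $\sigma_{[j,N]}(\cdots)$ in (\ref{N-SLE-measure}). Pushing the displayed identity forward under $\sigma$ and using Lemma \ref{commute} once more, $\sigma(Q^U_{\alpha'})=Q^U_{\sigma(\alpha')}=Q^U_\alpha$, which is precisely (\ref{N-SLE-measure}) at the index $j$.

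I expect the only real obstacle to be the combinatorial bookkeeping of the cyclic relabelling: one must confirm that pushing the $j=N$ identity for $\alpha'$ forward under $\sigma_{[j,N]}$ reproduces the $\rotimes$-structure of (\ref{N-SLE-measure}) with the $j$-th curve singled out, i.e.\ that the base spaces identified by $P_{\ha N}=P_{\ha j}\circ\sigma_{[j,N]}$ and the conditioning domains $U\sem\bigcup_{k\ne j}\eta_k$ line up correctly. Once this is in place, everything reduces to the $j=N$ case, which is immediate from the construction.
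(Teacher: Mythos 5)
Your proposal is correct and follows essentially the same route as the paper: the $j=N$ case from (\ref{cascade}) together with the kernel identity (\ref{P1linnu}), then reduction of general $j$ to that case via $\sigma_{[j,N]}^{-1}(\alpha)$, Lemma \ref{commute}, and the relation $P_{\ha N}=P_{\ha j}\circ\sigma_{[j,N]}$ from (\ref{P-N-j}). The extra remarks about the null set where $H^{U\sem\bigcup_k\eta_k}_{\alpha_N}$ vanishes are a harmless elaboration of what the paper leaves implicit.
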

\begin{proof}
Fix $j\in\N_N$. We need to show that,
\BGE Q^U_\alpha = \sigma_{[j,N]}( P_{\ha j}(Q^U_\alpha)(d(\eta_1,\dots,\ha\eta_j,\dots,\eta_N))) \rotimes    \mu^{U\sem \bigcup_{k\ne j} \eta_k}_{\alpha_j}(d\eta_j).\label{j}\EDE
That (\ref{j}) holds for $j=N$ follows  from (\ref{cascade}) and (\ref{P1linnu}). By Lemma \ref{commute}, ${\sigma}_{[j,N]}^{-1}(Q^U_\alpha)=Q^U_{\sigma_{[j,N]}^{-1}(\alpha)}$. 
Since $\sigma_{[j,N]}^{-1}(\alpha)=(\alpha_1,\dots,\alpha_{j-1},\alpha_{j+1},\dots,\alpha_N,\alpha_j)$ and (\ref{j}) holds for $j=N$,
$${\sigma}_{[j,N]}^{-1}(Q^U_\alpha)=P_{\ha N}({\sigma}_{[j,N]}^{-1}(Q^U_\alpha))(d(\eta_1,\dots,\ha\eta_j,\dots,\eta_N))\rotimes \mu^{U\sem \bigcup_{k\ne j} \eta_k}_{\alpha_j}(d\eta_j)$$
$$=P_{\ha j}(Q^U_\alpha) (d(\eta_1,\dots,\ha\eta_j,\dots,\eta_N))\rotimes \mu^{U\sem \bigcup_{k\ne j} \eta_k}_{\alpha_j}(d\eta_j),$$
where the last ``$=$'' follows from (\ref{P-N-j}).
This immediately implies (\ref{j}).
\end{proof}


Combining Lemmas \ref{Thm}, \ref{Lem-trivial} and \ref{Lem-finite}, we obtain the existence theorem.

\begin{Theorem} [Existence]
Let $\kappa\in (0,6]$. Then for any $\alpha\in \LP^U_N$, $Q^U_\alpha/|Q^U_\alpha|$ is an $N$-SLE$_\kappa$ law in $U$ with link pattern $\alpha$.
\end{Theorem}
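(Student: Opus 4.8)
The plan is to assemble the three preceding lemmas together with the remark following the definition of a multiple SLE$_\kappa$ measure; the genuinely substantive work has already been carried out upstream, so this proof is short. Fix $\kappa\in(0,6]$, $U\in{\cal U}$, and $\alpha\in\LP^U_N$. I would verify in turn that $Q^U_\alpha$ is (i) a multiple SLE$_\kappa$ measure, (ii) finite, and (iii) nontrivial, and then normalize.

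First I would record the elementary but essential observation underlying the word ``Clearly'' in the definition: the defining system (\ref{N-SLE-measure}) is homogeneous of degree one in $\mu$, since $\mu\mapsto P_{\ha j}(\mu)$ is linear and $\mu^{U\sem\cup_{k\ne j}\eta_k}_{(a_j,b_j)}$ is a fixed \emph{probability} kernel; hence rescaling a solution by a positive constant produces another solution. By Lemma \ref{Thm}, $Q^U_\alpha$ satisfies (\ref{N-SLE-measure}) for every $j\in\N_N$; by Lemma \ref{Lem-finite}, where the hypothesis $\kappa\le 6$ enters through the monotonicity of the one-link partition function, it is finite; and by Lemma \ref{Lem-trivial}, since $\alpha\in\LP^U_N$, it is nontrivial. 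Thus $0<|Q^U_\alpha|<\infty$, so $Q^U_\alpha/|Q^U_\alpha|$ is a well-defined probability measure, and by the homogeneity just noted it again satisfies (\ref{N-SLE-measure}) for all $j$.

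To finish I would invoke the characterization stated just before the definition: in view of (\ref{conditional}), a probability measure on $X^U_\alpha$ is the law of an $N$-SLE$_\kappa$ in $U$ with link pattern $\alpha$ precisely when it solves (\ref{N-SLE-measure}) for each $j$. Applying this to $Q^U_\alpha/|Q^U_\alpha|$ yields the theorem.

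I do not expect a real obstacle at this stage, because the difficulty lives in the cited lemmas rather than in their combination. If I were to flag the one point that deserves care, it is that the cascade construction (\ref{cascade}) delivers (\ref{N-SLE-measure}) only for the distinguished index $j=N$; it is the full $S_N$-equivariance $\sigma(Q^U_\alpha)=Q^U_{\sigma(\alpha)}$ of Lemma \ref{commute} --- bootstrapped from the two-link identity of Lemma \ref{2-commute} --- that promotes this to every $j$, and this equivariance is precisely what makes the normalized object symmetric enough to be a genuine multiple SLE law.
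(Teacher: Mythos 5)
Your proposal is correct and follows exactly the paper's own route: the paper proves this theorem in one line by combining Lemmas \ref{Thm}, \ref{Lem-trivial} and \ref{Lem-finite} with the ``Clearly'' remark in the definition of a multiple SLE$_\kappa$ measure. Your additional observation that the system (\ref{N-SLE-measure}) is homogeneous of degree one in $\mu$ correctly supplies the justification the paper leaves implicit.
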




Apparently, the existence statement holds for any $\kappa\in(6,8)$ such that $Q^U_\alpha$ is finite.

\section{Uniqueness}
In this section we  prove the uniqueness of $N$-SLE$_\kappa$ law in a simply connected domain with any given link pattern.
We will use the Markov chain theory in \cite{Markov}.
 By  Proposition 10.1.1 and Theorem 10.4.9 of the book, if a Markov chain $\Phi$  is  {\it irreducible} and admits an invariant probability measure $\mu$, then all ($\sigma$-finite) invariant measures of $\Phi$ are some constant times $\mu$. By  Section 4.2.1 of the book, $\Phi$ is called ($\phi$-)irreducible if there is a nontrivial $\sigma$-finite measure $\phi$ such that if  $A$ satisfies $\phi(A)>0$, then for any $x$ in the space, $L(x,A)>0$, which is equivalent to  $\sum_{m=1}^\infty P^m(x,A)>0$. Such $\phi$ is called an irreducibility measure for $\Phi$. 

 Fix a simply connected domain $D$ with locally connected boundary and $\alpha\in\LP^D_N$. For $j\in\N_N$, let $X_j$ be the space of unparametrized curves in $\lin D$ which connect points of $\alpha_j$. 
 Let $\ulin X$ be the space of $(\eta_1,\dots,\eta_N)\in \prod_{j=1}^N X_j$ such that for any $j\in\N_N$, $\alpha_j\in \LP^{D\sem \bigcup_{k\ne j}\eta_k}_1$. For any $\ulin A\subset \ulin X$, $\ulin \eta\in \ulin X$, and $j\in\N_N$, let $\pi^{\ulin \eta}_j(\ulin A)$   denote the set of $\gamma\in X_j$ such that if the $j$-th component of $\ulin \eta$ were replaced by $\gamma$, then the modified $\ulin \eta$ would lie in $\ulin A$. 

 Define a Markov chain $\Phi$ on $\ulin X$, whose transition kernel is $P=\frac 1N\sum_{j=1}^N P_j$, where
   $$P_j(\ulin \eta,\ulin A)=P_j((\eta_1,\dots,\eta_N),\ulin A)= \mu^{D\sem \bigcup_{k\ne { j}} \eta_k}_{\alpha_j}(\pi^{\ulin \eta}_j(\ulin A)).$$
   In plain words, given  $\Phi_0=\ulin\eta^0$, the next step $\Phi_1=\ulin\eta^1$ is given by $\eta^1_k=\eta^0_k$ for $k\ne \bf j$ and $\eta^1_{\bf j}=\gamma$, where $\bf j$ is chosen from $\N_N$ uniformly randomly, and $\gamma$ is sampled according to the SLE$_\kappa$ law $\mu^{D\sem \bigcup_{k\ne {\bf j}} \eta_k^0}_{\alpha_{\bf j}}$.  
   It is obvious that if $\mu$ is an $N$-SLE$_\kappa$ measure in $D$, then it is also an invariant measure for $\Phi$. 

\begin{Lemma}
  $\Phi$ is irreducible.
\end{Lemma}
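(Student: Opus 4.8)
The plan is to use $Q^D_\alpha$ itself as an irreducibility measure and to establish reachability by combining the Gibbs-sampler structure of $\Phi$ with a geometric ``untangling'' argument resting on Proposition \ref{Prop-abs}. By conformal invariance of chordal SLE, and hence of every kernel $P_j$, I may assume $D=\HH$. Take $\phi=Q^D_\alpha$: it is $\sigma$-finite by Definition \ref{QUalpha} and nontrivial because $\alpha\in\LP^D_N$ (Lemma \ref{Lem-trivial}). It then remains to prove that $L(\ulin\eta^0,\ulin A)=\sum_{m\ge 1}P^m(\ulin\eta^0,\ulin A)>0$ for every $\ulin\eta^0\in\ulin X$ whenever $Q^D_\alpha(\ulin A)>0$.

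The first ingredient is structural. Equation (\ref{j}), proved for every $j\in\N_N$ in Lemma \ref{Thm}, says precisely that under $Q^D_\alpha$ the conditional law of the $j$-th curve given all the others is the chordal SLE probability measure $\mu^{D\sem\bigcup_{k\ne j}\eta_k}_{\alpha_j}$, which is exactly the resampling law used by $P_j$. Thus $\Phi$ is the random-scan Gibbs sampler of $Q^D_\alpha$ (and $Q^D_\alpha$ is invariant, as already noted). The systematic sweep $P_1P_2\cdots P_N$ occurs with probability $N^{-N}$ inside $P^N$, and since each SLE kernel charges every nonempty relatively open family of curves -- a consequence of the positive probability, granted by Proposition \ref{Prop-abs}, of remaining in any subdomain sharing the two prime ends -- the standard positivity/communication theory for a Gibbs sampler with fully supported conditionals gives that every set of positive $Q^D_\alpha$-measure is reachable from each point of $\supp Q^D_\alpha$. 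This disposes of reachability once the chain has entered $\supp Q^D_\alpha$.

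It therefore suffices to drive an arbitrary $\ulin\eta^0\in\ulin X$ into $\supp Q^D_\alpha$ in finitely many steps with positive probability. Fix mutually disjoint tubes $T_1,\dots,T_N$ connecting the $\alpha_j$ (they exist since $\alpha\in\LP^D_N$), set $\ulin{\mathcal T}=\{\ulin\eta:\eta_j\subset T_j\ \forall j\}$, and note $Q^D_\alpha(\ulin{\mathcal T})>0$ by the iterated application of Proposition \ref{Prop-abs} in the proof of Lemma \ref{Lem-trivial}, so that $\ulin{\mathcal T}\subset\supp Q^D_\alpha$. I would move the curves into the tubes one at a time: to resample $\eta_j$ and confine it to a prescribed thin tube inside the component of $\HH\sem\bigcup_{k\ne j}\eta_k$ carrying $\alpha_j$ has positive probability by Proposition \ref{Prop-abs}. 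What makes this feasible in principle is that membership in $\ulin X$ forces, for each $j$, the endpoints $a_j,b_j$ to lie on a common simply connected component of the complement of the other curves; equivalently no curve separates another's endpoints, so every configuration in $\ulin X$ lies in the standard non-crossing isotopy class of the link pattern and can be deformed onto $\ulin{\mathcal T}$.

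The main obstacle is exactly this untangling: when I resample $\eta_j$ toward $T_j$, a not-yet-moved curve $\eta_k$ may cross $T_j$, so the target tube need not lie in the admissible component. I expect to resolve it by first clearing a corridor -- resampling each blocking curve so as to confine it, again via Proposition \ref{Prop-abs}, to a thin neighborhood of the relevant boundary arc or of an already-placed curve, thereby freeing $T_j$ -- and then showing, by induction on $N$ along a boundary ordering of the non-crossing links, that a finite such sequence terminates with all curves inside $T_1,\dots,T_N$. Once $\ulin{\mathcal T}\subset\supp Q^D_\alpha$ is reached, the Gibbs-sampler sweep of the second paragraph reaches any $\ulin A$ with $Q^D_\alpha(\ulin A)>0$, completing the proof that $\Phi$ is $Q^D_\alpha$-irreducible.
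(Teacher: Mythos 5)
Your overall strategy is the same as the paper's: pick mutually disjoint tubes $T_1,\dots,T_N$, use Proposition \ref{Prop-abs} to resample curves one at a time into tubes, and handle the ``untangling'' by an induction along a boundary ordering of the non-crossing links. However, two of your steps are genuine gaps rather than routine details. First, the claim that ``the standard positivity/communication theory for a Gibbs sampler with fully supported conditionals'' lets you reach \emph{every} set of positive $Q^D_\alpha$-measure from a point of $\supp Q^D_\alpha$ does not hold as stated: a set $\ulin{\cal A}$ of positive measure need not contain any relatively open set, so topological support of the conditionals tells you nothing about $P^m(\ulin\eta,\ulin{\cal A})$. What is actually needed is a measure-theoretic argument; the paper does this by choosing the irreducibility measure to be the \emph{product} $\phi=\prod_j\phi_j$ with $\phi_j=\ind_{T_j}\mu^D_{\alpha_j}$ (not $Q^D_\alpha$), and then running a backward induction on sets $\ulin{\cal A}^{2N+t}$ defined by positivity of $\phi_t$-measure of sections, with Fubini guaranteeing at each stage that the relevant section has positive $\phi_t$-measure and Proposition \ref{Prop-abs} (i.e.\ mutual absolute continuity of the SLE laws on the event of staying in $T_t$) converting that into positivity of $\mu^{D\sem\bigcup_{j\ne t}\gamma_j}_{\alpha_t}$. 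Your choice of $\phi=Q^D_\alpha$ forfeits the product structure this induction relies on, and also forces you to reach positive-measure sets that are not concentrated on the tube event $\ulin{\cal T}$, which your argument does not address.

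Second, the untangling step is exactly where the work is, and you leave it as an intention (``I would move the curves\dots'', ``I expect to resolve it\dots''). The paper's resolution is concrete: after reordering the links (via Lemma \ref{commute}) so that each $\alpha_j$ bounds an arc $I_j$ containing only earlier-indexed links, it uses a topological fact (F) to build \emph{auxiliary} tubes $S_1,\dots,S_{j}$ inductively so that $S_{m+1}$ is disjoint both from the already-placed $S_1,\dots,S_m$ and from all not-yet-moved curves $\eta_k$ \emph{and} target tubes $T_k$ with $k\ge m+2$; the chain first sweeps all curves into the $S_j$ (forward order) and only then into the $T_j$ (reverse order). This two-pass structure is what guarantees that at every resampling step the target tube lies inside the admissible complementary component; your single-pass ``clear a corridor'' sketch does not establish this, and without it the claim $P_{j_s}(\ulin\gamma,\ulin{\cal A}^s)>0$ is unjustified. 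In short: right skeleton, but the two load-bearing steps (measure-theoretic reachability and the untangling induction) are missing.
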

\begin{proof}
%
  We use the notion of tubes in the proof of Lemma \ref{Lem-trivial}, and  find mutually disjoint tubes ${T}_j$, $j\in\N_N$, in $D$ such that $T_j$ connects $\alpha_j$. Let ${\cal T}_j=\{\eta\in X_j:\eta\subset T_j\}$, and $\phi_j=\ind_{ {{T}_j}}\mu^D_{\alpha_j}$, i.e., the law of the chordal SLE$_\kappa$  in $D$ connecting $\alpha_j$ restricted to the event that the curve stays in ${T}_j$.   By Proposition \ref{Prop-abs}, each $\phi_j$ is a nontrivial finite measure on $X_j$. Let $\phi=\prod_{j=1}^N \phi_j$. We are going to show that $\phi$ is an irreducibility measure for $\Phi$.

  Suppose $\ulin{\cal A}\subset\ulin X$ satisfies that $\phi(\ulin{\cal A})>0$. Let $\ulin \eta=(\eta_1,\dots,\eta_N)\in \ulin X$. We need to show that $P^m(\ulin \eta,\ulin{\cal A})>0$ for some $m\in\N$. It suffices to construct measurable $\ulin{\cal A}^0,\dots,\ulin{\cal A}^{m}\subset \ulin X$ with $\ulin{\cal A}^0=\{\ulin \eta\}$ and $\ulin{\cal A}^m=\ulin{\cal A}$ such that for any $\ulin \gamma\in \ulin{\cal A}^{s-1}$, $1\le s\le m$, $P(\ulin \gamma,\ulin{\cal A}^{s})>0$, which holds if
  there is $j_s\in\N_N$ such that $P_{j_s}(\ulin \gamma,\ulin{\cal A}^{s})>0$.

  First, suppose $\ulin{\cal A}= \ulin{\cal T}:=\prod_{j=1}^N {{\cal T}_j}$. By reordering the components of $\alpha$ and using Lemma \ref{commute}, we may assume that for any $1\le j\le N$, there is an open boundary arc $I_j$ of $D$ with endpoints $\alpha_j$ such that ${\cal I}_j:=\{k\in\N_N:\alpha_k\subset I_j\}\subset \N_{j-1}$.  We have the following fact.
  \begin{itemize}
    \item [(F)] Let $r\in\N_N$. Let $A,B$ be closed subset of $\lin D$ such that $A\cup  {I_r}$ is connected, $A\cap ({\pa D\sem I_r}) =\emptyset$, $B\cup  ({\pa D\sem I_r})$ is connected, $B\cap \lin{I_r}=\emptyset$, and $A\cap B=\emptyset$. Then there is a tube connecting $\alpha_{r}$ in $D$, which is disjoint from $A\cup B$.
 %
  \end{itemize}

Applying (F) to $r=1$, $A=\emptyset$ and $B=\bigcup_{j=2}^N(\eta_j\cup T_j)$, we construct a tube $S_1$ connecting $\alpha_1$, which is disjoint from all $\eta_j,T_j$, $2\le j\le N$.  Suppose for some $m\le N-1$, we have constructed mutually disjoint tubes $S_j$ connecting $\alpha_j$ for $1\le j\le m$, which are   disjoint from all $\eta_k,T_k$ for $k\ge m+1$. Applying (F) to $r=m+1$, $A=\bigcup_{j\in {\cal I}_{m+1}} S_j$  and $B=\bigcup_{j\in \N_m\sem  {\cal I}_{m+1}} S_j \cup \bigcup_{j=m+2}^N (\eta_j\cup T_j)$,   we construct a tube $S_{m+1}$ connecting $\alpha_{m+1}$, which is disjoint from $S_j$ for $1\le j\le m$ and $\alpha_j\cup T_j$ for $  m+2\le j\le N$. By induction we get mutually disjoint tubes $S_j$ connecting $\alpha_j$, $1\le j\le N$, such that $S_j$ is disjoint from $\eta_k$ and $T_k$ whenever $j<k$. Let ${\cal S}_j=\{\eta\in X_j:\eta\subset S_j\}$.  For $1\le s\le N$, we define $\ulin {\cal A}^s=\prod_{j=1}^{s} {\cal S}_j\times \prod_{j={s+1}}^N \{\eta_j\}$. Note that $\ulin{\cal A}^0=\{\ulin \eta\}$ and $\ulin {\cal A}^N=\prod_{j=1}^N {\cal S}_j=:\ulin{\cal S}$. Let $s\in\N_N$ and $\ulin \gamma=(\gamma_1,\dots,\gamma_N)\in {\cal A}^{s-1}$. Then $\gamma_j\subset S_j$ for $1\le j\le s$ and $\gamma_j=\eta_j$ for $s+1\le j\le N$. Since $S_s$ is disjoint from $S_j$, $j<s$, and $\eta_j$, $j>s$, it is a tube in $D\sem \bigcup_{j\ne s} \gamma_j$ connecting $\alpha_j$. Thus, by Proposition \ref{Prop-abs},
 $P_{ s}(\ulin \gamma,\ulin{\cal A}^{s}) =\mu^{D\sem \bigcup_{j\ne s} \gamma_j}_{\alpha_s}(\pi^{\ulin \gamma}_s(\ulin {\cal A}^s))=\mu^{D\sem \bigcup_{j\ne s} \gamma_j}_{\alpha_s}({\cal S}_s)>0$.

For $1\le r\le N$, we let $\ulin {\cal A}^{N+r}=\prod_{j=1}^{N-r} {\cal S}_j\times \prod_{j=N+1-r}^{N} {\cal T}_j$. Note that $\ulin {\cal A}^{2N}= \ulin{\cal T}$. Let $r\in \N_N$ and $\ulin \gamma=(\gamma_1,\dots,\gamma_N)\in \ulin{\cal A}^{N+r-1}$. Since $T_{N+1-r}$ is disjoint from $T_j$, which contains $\gamma_j$, for $j>N+1-r$ and $S_j$, which also contains $\gamma_j$, for $j<N+1-r$, it is a tube in $D\sem \bigcup_{j\ne s} \gamma_j$ connecting $\alpha_{N+1-r}$. Thus, by Proposition \ref{Prop-abs},
$$P_{N+1-r} (\ulin \gamma,\ulin{\cal A}^{N+r})=\mu^{D\sem \bigcup_{j\ne N+1-r} \gamma_j}_{\alpha_{N+1-r}} (\pi^{\ulin \gamma}_{N+1-r}(\ulin {\cal A}^{N+r}))=\mu^{D\sem \bigcup_{j\ne N+1-r} \gamma_j}_{\alpha_{N+1-r}}({\cal T}_{N+1-r})>0.$$
So the sequence $\ulin{\cal A}^s$, $0\le s\le 2N$, satisfy the desired properties for $\ulin{\cal A}=\ulin{\cal T}$.

Now we consider the general case. Since $\phi$ is supported by $\ulin{\cal T}$, we may assume $\ulin{\cal A}\subset \ulin{\cal T}$. It suffices to further construct  $\ulin{\cal A}^{2N+t}$, $1\le t\le N$, with $\ulin{\cal A}^{3N}=\ulin{\cal A}$, such that for any $\ulin \gamma\in \ulin{\cal A}^{2N+t-1}$, $1\le t\le N$,   $P_{t}(\ulin \gamma,\ulin{\cal A}^{2N+t})>0$. We define these $\ulin{\cal A}^{2N+t}$'s backward inductively such that $\ulin{\cal A}^{3N}=\ulin{\cal A}$ and
\BGE \ulin{\cal A}^{2N+t-1}=\{\ulin \gamma\in \ulin{\cal T}:\phi_t(\pi^{\ulin \gamma}_t(\ulin {\cal A}^{2N+t}))>0\}\label{back-induc}\EDE for
$2\le t\le N$. Since $\phi=\prod_{j=1}^N\phi_j$ and $\phi(\ulin{\cal A}^{3N})>0$, by Fubini theorem and backward induction, each $\ulin{\cal A}^{2N+t}$ has the form $ {\cal C}_t\times \prod_{j=t+1}^N {\cal T}_t$ for some $ {\cal C}_t\subset\prod_{j=1}^t {\cal T}_j$ with $\prod_{j=1}^t \phi_j ({\cal C}_t)>0$. From $\ulin{\cal A}^{2N+1}={\cal C}_1\times \prod_{j=2}^N {\cal T}_j$ and $\phi_1({\cal C})>0$ we see that (\ref{back-induc}) also holds for $t=1$.

Let $1\le t\le N$ and $\ulin \gamma\in \ulin{\cal A}^{2N+t-1}$. By (\ref{back-induc}), $\mu^D_{\alpha_t}(\pi^{\ulin \gamma}_t(\ulin {\cal A}^{2N+t}))=\phi_t(\pi^{\ulin \gamma}_t(\ulin {\cal A}^{2N+t}))>0$. Since $T_t$ is a tube in $D\sem \cup_{j\ne t} \gamma_j$ connecting $\alpha_t$,  by  Proposition \ref{Prop-abs},
$$P_t(\ulin \gamma,\ulin {\cal A}^{2N+t})=\mu^{D\sem \bigcup_{j\ne t} \gamma_j}_{\alpha_t}(\pi^{\ulin \gamma}_t(\ulin {\cal A}^{2N+t}))=\mu^{D_t(\ulin \gamma)}_{\alpha_t}(\pi^{\ulin \gamma}_t(\ulin {\cal A}^{2N+t}))>0.$$
Thus, $\ulin{\cal A}^s$, $0\le s\le 3N$, satisfy the desired properties for the general $\ulin{\cal A}$.
\end{proof}

The irreducible Markov chain theory then implies the uniqueness theorem.

\begin{Theorem} [Uniqueness]
  Let $\kappa\in (0,8)$. Let $D$ be a simply connected domain with locally connected boundary and $\alpha\in \LP^D_N$. If there exists an $N$-SLE$_\kappa$ law $\mu$ in $D$ with link pattern $\alpha$, then  $Q^D_\alpha$ is finite, and its normalization $Q^D_\alpha/|Q^D_\alpha|$ equals $\mu$.
\end{Theorem}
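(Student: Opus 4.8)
The plan is to recognize that uniqueness is now essentially an assembly step: the substantive work has already been carried out in the irreducibility lemma, and what remains is to invoke the cited Markov chain theory. Concretely, I would appeal to Proposition 10.1.1 and Theorem 10.4.9 of \cite{Markov}, as recalled above: since the chain $\Phi$ has just been shown to be irreducible, as soon as it admits an invariant probability measure, every $\sigma$-finite invariant measure is a constant multiple of that probability measure. The whole argument thus reduces to exhibiting two invariant measures for $\Phi$ and comparing them.

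First I would record that the hypothesized $N$-SLE$_\kappa$ law $\mu$ is an invariant probability measure for $\Phi$. Indeed, $\mu$ satisfies (\ref{N-SLE-measure}) for every $j\in\N_N$, and (\ref{N-SLE-measure}) for a fixed $j$ is precisely the assertion that resampling the $j$-th curve from the chordal SLE kernel $\mu^{D\sem \bigcup_{k\ne j}\eta_k}_{\alpha_j}$ while freezing the remaining curves leaves $\mu$ unchanged, i.e.\ $\mu P_j=\mu$: the pushforward $P_{\ha j}(\mu)$ on the right of (\ref{N-SLE-measure}) is the law of the frozen coordinates, the $\rotimes$ attaches the resampled curve, and $\sigma_{[j,N]}$ merely returns it to slot $j$. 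Averaging over $j$ gives $\mu P=\frac1N\sum_{j=1}^N \mu P_j=\mu$. This is the invariance remarked on just before the irreducibility lemma, and I would only spell it out to the extent of matching the disintegration in (\ref{N-SLE-measure}) against the definition of $P_j$.

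Next I would produce the second invariant measure from the construction. By Lemma \ref{Thm}, applied with $U=D$ and $\alpha\in\LP^D_N$, the $\sigma$-finite measure $Q^D_\alpha$ of Definition \ref{QUalpha} also satisfies (\ref{N-SLE-measure}) for every $j$, hence is a $\sigma$-finite invariant measure for $\Phi$ by the same equivalence. The uniqueness result then forces $Q^D_\alpha=c\,\mu$ for some constant $c\in[0,\infty)$ (finiteness of $c$ being automatic, since $c=\infty$ would contradict $\sigma$-finiteness against the probability measure $\mu$). To pin down $c$, I would invoke Lemma \ref{Lem-trivial}: because $\alpha\in\LP^D_N$, the measure $Q^D_\alpha$ is nontrivial, so $c>0$. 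As $\mu$ is a probability measure this yields $|Q^D_\alpha|=c\in(0,\infty)$, whence $Q^D_\alpha$ is finite and $Q^D_\alpha/|Q^D_\alpha|=\mu$, which is exactly the claim.

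I do not anticipate a real obstacle in this final step, since the difficulty was front-loaded into the irreducibility lemma (the tube construction and the repeated use of Proposition \ref{Prop-abs}). The only points demanding care are bookkeeping ones: verifying cleanly that the fixed-point identity (\ref{N-SLE-measure}) coincides with invariance under the coordinatewise resampling kernels $P_j$, so that both $\mu$ and $Q^D_\alpha$ genuinely qualify as invariant measures; and confirming that the cited uniqueness statement applies to arbitrary $\sigma$-finite invariant measures rather than merely finite ones. This last point is exactly why the hypothesis $\alpha\in\LP^D_N$, funneled through Lemma \ref{Lem-trivial}, is indispensable: it excludes the degenerate alternative $c=0$ and thereby upgrades ``$Q^D_\alpha$ is proportional to $\mu$'' into the quantitative finiteness-and-normalization conclusion.
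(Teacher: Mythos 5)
Your proposal is correct and follows exactly the route the paper intends: the paper's "proof" is the single sentence "The irreducible Markov chain theory then implies the uniqueness theorem," and your assembly — $\mu$ is an invariant probability for $\Phi$ via (\ref{N-SLE-measure}), $Q^D_\alpha$ is a $\sigma$-finite invariant measure by Lemma \ref{Thm}, Meyn--Tweedie forces $Q^D_\alpha=c\mu$, and Lemma \ref{Lem-trivial} gives $c\in(0,\infty)$ — is precisely the intended argument, spelled out.
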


\section{Partition function}
For $\alpha\in\LP^U_N$, we call $H^U_\alpha:=|Q^U_\alpha|$ a multiple SLE$_\kappa$ partition function. When $N=1,2$, the function is finite and has a closed formula. For the rest of the section we assume  $\kappa\le 6$, in which case $H^U_\alpha$ is known to be finite by Lemma \ref{Lem-finite}.  By (\ref{covariance},\ref{cascade}), $H^U_\alpha$ satisfies the following conformal covariance. Suppose $f$ is a conformal map on $U$, which extends to neighborhoods of points in
 $\alpha $.  Then
\BGE  H^U_\alpha=\prod_{x\in\alpha } |f'(x)|^{\bb} H^{f(U)}_{f(\alpha)}.\label{covariance-Q}\EDE
Thus, it suffices to consider the case that $U=\HH$. In that case, $\LP_N:=\LP^{\HH}_N$ is an open subset of $\R^{2N}$. 
By a slight abuse of notation, we also call the function $H(\alpha):=H^{\HH}_\alpha$  defined on $\LP_N$ a multiple ($N$-)SLE$_\kappa$ partition function.

Our definition follows the theme of \cite{KL-multiple}, where the partition function is defined as the total mass of a measure. It is different from the multiple SLE partition function introduced in \cite{Julien-Euler,Julien}, in which the function is assumed to satisfy a system of $2N$ second-order linear PDE (cf.\ \cite[Formula (1.2)]{KP-multiple}), which guarantees the existence of local multiple SLE.

A local multiple SLE$_\kappa$ consists of $2N$ SLE$_\kappa$-type curves emanating from the points of $\alpha$ and stopped before leaving neighborhoods of these points, which satisfy some commutation relation. These curves may or may not be extended to form  an $N$-SLE$_\kappa$ associated with   $\alpha$. Some authors call the usual multiple SLE {\it global multiple SLE}, and call a partition function {\it multiple SLE pure partition function} whenever it generates a local multiple SLE that is a part of a global multiple SLE with a fixed link pattern.


By (\ref{covariance-Q}), $H$ satisfies the conformal covariance for multiple SLE partition function (cf.\ \cite[Formula (1.3)]{KP-multiple}). We will show that $H$ satisfies the asymptotics for pure partition function (cf.\ \cite[Formula (1.3)]{HW-hSLE}) in Lemma \ref{Lem-ASY}.

For $N\ge 2$ and $j\in \N_N$, let $P_{\ha j}$ denote the map $\ulin x\mapsto(x_1,\dots,\ha{x_{2j-1}},\ha{x_{2j}},\dots,x_{2N})$ from $\R^{2N}$ to $\R^{2(N-1)}$. This is consistent with our definition of $P_{\ha j}:\LP_N\to\LP_{N-1}$.

Let $D$ be a simply connected domain and $\alpha\in\LP^D_N$. For two prime ends $a\ne b$ of $D$, which are different from points in $\alpha$, we define
\BGE I^{D;(a,b)}_\alpha=\int \mu^D_\alpha(d\eta) H^{U\sem\eta}_\alpha.\label{I}\EDE
The conformal covariance of $H^U_\alpha$ implies the following conformal covariance of $I^{D;(a,b)}_{\alpha}$. If $f$ satisfies the conditions of the $f$ in (\ref{covariance-Q}), then
\BGE I^{U;(a,b)}_\alpha=\prod_{x\in\alpha} |f'(x)|^{\bb} H^{f(U);(f(a),f(b))}_{f(\alpha)}.\label{covariance-I}\EDE
By (\ref{cascade}) and Lemma \ref{commute}, if $N\ge 2$, then for any $j\in\N_N$,
\BGE H^D_{\alpha}=H^D_{\alpha_j} I^{D;\alpha_j}_{P_{\ha j}(\alpha)}.\label{HI}\EDE


\begin{Lemma}
  Let $N\ge 2$ and $j\in\N_N$. Let $\ulin y\in\R^{2N}$ be such that $y_{2j}=y_{2j-1}\ne y_k$ for any $k\in\N_{2N}\sem \{2j,2j-1\}$, and $P_{\ha j}(\ulin y)\in\LP_{N-1}$. Then
\BGE \lim_{\LP_N\ni \ulin x\to \ulin y} |x_{2j}-x_{2j-1}|^{2\bb} H(\ulin x)\to H(P_{\ha j}(\ulin y)).\label{ASY}\EDE
\label{Lem-ASY}
\end{Lemma}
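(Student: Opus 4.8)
The plan is to reduce to the last link and then pass to the limit inside the cascade integral, with all Poisson-kernel asymptotics handled by the $\kappa\le 6$ monotonicity. First, by the permutation symmetry $\sigma(Q^{\HH}_\alpha)=Q^{\HH}_{\sigma(\alpha)}$ of Lemma \ref{commute}, I may relabel so that $j=N$; write $\alpha$ for the pattern with $\alpha_k=(x_{2k-1},x_{2k})$, and put $\alpha_N=(x_{2N-1},x_{2N})$. The cascade relation (\ref{cascade}) together with $H^{\HH}_{(a,b)}=|a-b|^{-2\bb}$ gives
\[
|x_{2N}-x_{2N-1}|^{2\bb}H(\ulin x)=\int Q^{\HH}_{P_{\ha N}(\ulin x)}(d\ulin\eta)\,g_{\ulin x}(\ulin\eta),\qquad g_{\ulin x}(\ulin\eta):=|x_{2N}-x_{2N-1}|^{2\bb}\,H^{\HH\sem\bigcup_k\eta_k}_{\alpha_N},
\]
where $\ulin\eta=(\eta_1,\dots,\eta_{N-1})$ ranges over tuples carrying $P_{\ha N}(\ulin x)$. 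The target is $\int g_{\ulin x}\,dQ^{\HH}_{P_{\ha N}(\ulin x)}\to |Q^{\HH}_{P_{\ha N}(\ulin y)}|=H(P_{\ha N}(\ulin y))$ as $\ulin x\to\ulin y$; that is, I must show the integrand tends to $1$ and exchange the limit with the integral.

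Second, I squeeze $g_{\ulin x}$ between two explicit kernels. Since $\HH\sem\bigcup_k\eta_k\subset\HH$ and the $\eta_k$ a.s.\ avoid the fixed points $x_{2N-1},x_{2N}$ (as $\kappa<8$), monotonicity gives $H^{\HH\sem\bigcup_k\eta_k}_{\alpha_N}\le H^{\HH}_{\alpha_N}=|x_{2N}-x_{2N-1}|^{-2\bb}$, so $0\le g_{\ulin x}\le 1$ everywhere. For a matching lower bound, let $y^\ast=y_{2N-1}=y_{2N}$ and let $B=B(y^\ast,\rho)\cap\HH$ be a small half-disk at $y^\ast$ whose diameter lies on $\pa\HH$. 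On the event $\{\bigcup_k\eta_k\cap\lin B=\emptyset\}$ one has $B\subset\HH\sem\bigcup_k\eta_k$, so for $x_{2N-1},x_{2N}\in B$ on the diameter monotonicity gives $H^{\HH\sem\bigcup_k\eta_k}_{\alpha_N}\ge H^B_{\alpha_N}$; hence $g_{\ulin x}(\ulin\eta)\ge c_{\ulin x}(\rho):=|x_{2N}-x_{2N-1}|^{2\bb}H^B_{\alpha_N}$, a bound independent of $\ulin\eta$. The half-disk kernel is explicit, and since $y^\ast$ is interior to the flat part of $\pa B$ the universal local behaviour $P^B_{(a,b)}\sim|a-b|^{-2}$ gives $c_{\ulin x}(\rho)\to 1$ as $\ulin x\to\ulin y$. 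Thus no delicate distortion estimate on the random domains $\HH\sem\bigcup_k\eta_k$ is needed.

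Third, I integrate the two bounds. The upper one gives $\int g_{\ulin x}\,dQ^{\HH}_{P_{\ha N}(\ulin x)}\le H^{\HH}_{P_{\ha N}(\ulin x)}$, and the lower one gives $\int g_{\ulin x}\,dQ^{\HH}_{P_{\ha N}(\ulin x)}\ge c_{\ulin x}(\rho)\,Q^{\HH}_{P_{\ha N}(\ulin x)}\big(\textstyle\bigcup_k\eta_k\cap\lin B=\emptyset\big)$. Passing $\ulin x\to\ulin y$ in the upper bound uses convergence of total mass $H^{\HH}_{P_{\ha N}(\ulin x)}\to H^{\HH}_{P_{\ha N}(\ulin y)}$; in the lower bound, since $\{\bigcup_k\eta_k\cap\lin B=\emptyset\}$ is open in the curve space, the portmanteau theorem gives $\liminf Q^{\HH}_{P_{\ha N}(\ulin x)}(\cdots)\ge Q^{\HH}_{P_{\ha N}(\ulin y)}(\bigcup_k\eta_k\cap\lin B=\emptyset)$, and this last quantity increases to $H(P_{\ha N}(\ulin y))$ as $\rho\downarrow 0$ because, $\kappa<8$, the limiting curves a.s.\ avoid the single boundary point $y^\ast$. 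Both bounds then pinch the limit to $H(P_{\ha N}(\ulin y))$, giving (\ref{ASY}).

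The main obstacle is the input invoked in the third step: the weak convergence $Q^{\HH}_{P_{\ha N}(\ulin x)}\Rightarrow Q^{\HH}_{P_{\ha N}(\ulin y)}$ of the reduced $(N-1)$-SLE$_\kappa$ measure as the $2(N-1)$ marked points move, equivalently continuity of $H$ on $\LP_{N-1}$ together with weak continuity of the law. For $N=2$ this is immediate: a single Möbius map of $\HH$ fixes both points of the reduced pattern, so the reduced measure becomes a \emph{fixed} chordal SLE$_\kappa$ law and the exchange reduces to plain dominated convergence against the constant bound $1$. For $N\ge 3$ the reduced pattern cannot be frozen by a conformal map, and I would establish the required continuity by induction on $N$ through the cascade (\ref{cascade}): continuity of the chordal SLE$_\kappa$ law and of the one-link partition function in their endpoints propagates through the weighting by $H^{\HH\sem\bigcup_k\eta_k}_{\alpha_N}$, with the $\kappa\le 6$ monotonicity supplying the uniform domination needed to carry the weak limit through that weighting.
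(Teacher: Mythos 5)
Your decomposition is the reverse of the paper's, and that choice is where the trouble lies. The paper uses the commutation result (Lemma \ref{commute}) to move the collapsing link to the \emph{root} of the cascade: via (\ref{I}) and (\ref{HI}) it writes $|x_{2j}-x_{2j-1}|^{2\bb}H(\ulin x)=\int \mu^{\HH}_{(x_{2j},x_{2j-1})}(d\eta)\,H^{\HH\sem\eta}_{P_{\ha j}(\ulin x)}$. There the varying measure is a single chordal SLE$_\kappa$ \emph{probability} law whose two endpoints collapse, so the curve shrinks to the point $y_{2j}$, the complementary domain converges to $\HH$, the integrand converges to $H(P_{\ha j}(\ulin y))$, and monotonicity supplies the dominating bound $H^{\HH\sem\eta}_{P_{\ha j}(\ulin x)}\le H(P_{\ha j}(\ulin x))$; dominated convergence finishes the proof. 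No continuity of the $(N-1)$-curve measure in its marked points is needed. You instead integrate out the other $N-1$ curves, so the varying measure is $Q^{\HH}_{P_{\ha N}(\ulin x)}$ itself, and your third step requires the weak convergence $Q^{\HH}_{P_{\ha N}(\ulin x)}\Rightarrow Q^{\HH}_{P_{\ha N}(\ulin y)}$ together with convergence of total masses.

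That weak convergence is a genuine gap. It appears nowhere in the paper (continuity of the total mass $H$ on $\LP_{N-1}$ is only proved in the \emph{following} lemma, and weak continuity of the law is never addressed), and your sketch of obtaining it by induction through the cascade meets a real obstruction: propagating weak convergence through the kernel $\rotimes\, Q^{U\sem\bigcup_k\eta_k}_{\alpha_{N-1}}$ requires some continuity of $\ulin\eta\mapsto Q^{\HH\sem\bigcup_k\eta_k}_{\alpha_{N-1}}$ in the curves, and for $\kappa\in(4,6]$ the complementary domains (hence these measures) change discontinuously as a curve passes from nearly touching to touching a boundary arc or another curve; one would have to show the discontinuity set is null for the limit measure, a separate nontrivial argument. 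The other ingredients of your proof --- the two-sided squeeze of the integrand by monotonicity, the half-disk lower bound $c_{\ulin x}(\rho)\to 1$, the portmanteau inequality on the open event, and $Q^{\HH}_{P_{\ha N}(\ulin y)}(\bigcup_k\eta_k\cap\lin B=\emptyset)\uparrow H(P_{\ha N}(\ulin y))$ as $\rho\downarrow 0$ --- are sound for $\kappa\le 6$; the repair is simply to condition the other way, as the paper does.
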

\begin{proof}
By  (\ref{I},\ref{HI}),
\BGE |x_{2j}-x_{2j-1}|^{2\bb} H(\ulin x)=\int \mu^{\HH}_{(x_{2j},x_{2j-1})} H^{\HH\sem \eta}_{P_{\ha j}(\ulin x)}.\label{ASY2}\EDE
As $\ulin x\to \ulin y$, $x_{2j},x_{2j-1}\to y_{2j}$, so the SLE$_\kappa$ curve in $\HH$ from $x_{2j}$ to $x_{2j-1}$ shrinks to the single point $y_{2j}$, which implies that the integrand in (\ref{ASY2}) tends to the RHS of (\ref{ASY}). So we get (\ref{ASY}) using dominated convergence theorem and monotonicity of $H$.
\end{proof}


\begin{Lemma}
  For any $N\in\N$, $H$ is continuous on ${\LP}_{N}$. 
\end{Lemma}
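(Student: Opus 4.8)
The plan is to prove continuity of $H$ on $\LP_N$ by induction on $N$, using the integral representation furnished by the cascade relation together with the asymptotic control already obtained in Lemma~\ref{Lem-ASY}. The base cases $N=1,2$ are immediate: for $N=1$ we have the closed formula $H(\ulin x)=|x_1-x_2|^{-2\bb}$, and for $N=2$ Lemma~\ref{2-commute} gives $H(\ulin x)=G(r)H_{\alpha_1}H_{\alpha_2}$ where $r$ is the cross ratio and $G$ is (essentially) a hypergeometric function, which is visibly continuous in $\ulin x$ on $\LP_2$. So assume continuity holds for $N-1$ and fix $\ulin y\in\LP_N$. The key identity is (\ref{HI}): writing $H(\ulin x)=H^{\HH}_{\alpha_j}(\ulin x)\,I^{\HH;\alpha_j}_{P_{\ha j}(\ulin x)}$, and expanding $I$ via (\ref{I}), we get
\BGE H(\ulin x)=|x_{2j-1}-x_{2j}|^{-2\bb}\int \mu^{\HH}_{(x_{2j-1},x_{2j})}(d\eta)\, H^{\HH\sem\eta}_{P_{\ha j}(\ulin x)}.\label{contrep}\EDE
The factor $|x_{2j-1}-x_{2j}|^{-2\bb}$ is continuous and nonvanishing near $\ulin y$, so the task reduces to showing that the integral $I^{\HH;\alpha_j}_{P_{\ha j}(\ulin x)}$ is continuous at $\ulin y$.

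To handle the integral I would couple the SLE curves $\eta$ for nearby parameters. Since $\ulin y\in\LP_N$, the two endpoints $y_{2j-1}\ne y_{2j}$ are distinct, so as $\ulin x\to\ulin y$ the law $\mu^{\HH}_{(x_{2j-1},x_{2j})}$ converges (by conformal invariance of SLE, sending the endpoints to $0,\infty$ by a Möbius map depending continuously on the endpoints, this is just continuity of the Loewner flow in the driving data) to $\mu^{\HH}_{(y_{2j-1},y_{2j})}$; realize all these on a common probability space via a single Brownian motion so that $\eta^{\ulin x}\to\eta^{\ulin y}$ in an appropriate topology (e.g. Carathéodory convergence of complementary domains / Hausdorff convergence of curves). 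Then I would argue that for almost every fixed realization of $\eta$, the map $\ulin x\mapsto H^{\HH\sem\eta}_{P_{\ha j}(\ulin x)}$ is continuous: by conformal covariance (\ref{covariance-Q}) applied to a uniformizing map of the relevant component of $\HH\sem\eta$, this equals a product of derivative factors $\prod|f'(x_k)|^{\bb}$ times the $(N-1)$-point partition function $H$ at the image points, and both the conformal factors and the image points vary continuously in $\ulin x$, while $H$ at level $N-1$ is continuous by the induction hypothesis. Combined with the convergence of the curves $\eta^{\ulin x}$, a stability estimate for $H^{\HH\sem\eta}$ under perturbation of $\eta$ (again via conformal covariance and continuity of the uniformizing maps under Carathéodory convergence) yields joint continuity of the integrand.

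The remaining ingredient is a dominated-convergence argument to pass the limit inside the integral. Here monotonicity of $H^D$ under domain inclusion (available since $\kappa\le 6$, so $\bb\ge 0$) supplies the uniform bound: for $\ulin x$ in a small neighborhood of $\ulin y$ the domains $\HH\sem\eta$ are contained in $\HH$, giving $H^{\HH\sem\eta}_{P_{\ha j}(\ulin x)}\le H^{\HH}_{P_{\ha j}(\ulin x)}$, which is bounded near $\ulin y$ by continuity at level $N-1$; integrability of the dominating function follows because $I^{\HH;\alpha_j}_{P_{\ha j}(\ulin y)}=H(\ulin y)\,|y_{2j-1}-y_{2j}|^{2\bb}<\infty$ by Lemma~\ref{Lem-finite}. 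Dominated convergence then gives $I^{\HH;\alpha_j}_{P_{\ha j}(\ulin x)}\to I^{\HH;\alpha_j}_{P_{\ha j}(\ulin y)}$, and multiplying by the continuous prefactor completes the induction.

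I expect the main obstacle to be the \emph{almost-sure continuity and stability of the integrand} $H^{\HH\sem\eta}_{P_{\ha j}(\ulin x)}$ jointly in $\ulin x$ and in the random curve $\eta$: one must ensure that as $\ulin x\to\ulin y$ the perturbation of the endpoints $x_{2j-1},x_{2j}$ (which move the curve $\eta$) does not interact badly with the configuration $P_{\ha j}(\ulin x)$ of the remaining marked points. The delicate point is that the complementary component $\HH\sem\eta$ whose boundary carries $P_{\ha j}(\ulin x)$ can change its topology or pinch as $\eta$ varies; controlling the uniformizing conformal map of this component under Carathéodory convergence, and confirming that the marked points $P_{\ha j}(\ulin x)$ stay in the closure of a single well-behaved component away from $\eta$ for $\ulin x$ near $\ulin y$, is where the real work lies. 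Everything else reduces to continuity of SLE in its driving data and the conformal covariance and monotonicity already established.
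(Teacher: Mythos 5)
Your overall architecture matches the paper's: induction on $N$, the cascade/integral representation $H(\ulin x)=H^{\HH}_{\alpha_j}\int \mu^{\HH}_{\alpha_j}(d\eta)\,H^{\HH\sem\eta}_{P_{\ha j}(\ulin x)}$, pointwise convergence of the integrand via conformal covariance plus the induction hypothesis, and dominated convergence via the monotonicity of $H$ for $\kappa\le 6$. However, there is a genuine gap exactly where you yourself locate ``the real work'': you let the endpoints $x_{2j-1},x_{2j}$ of the integrated-out link vary with $\ulin x$, so the reference measure $\mu^{\HH}_{(x_{2j-1},x_{2j})}$ changes along the sequence, and you are forced to couple the curves $\eta^{\ulin x}$ and prove stability of $\eta\mapsto H^{\HH\sem\eta}_{\beta}$ under perturbation of the curve. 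For $\kappa\in(4,6]$ the curve is non-simple and touches $\R$; complementary components can appear, disappear, or pinch under small perturbations of the driving data, Carath\'eodory convergence of the particular component carrying a given marked point is not automatic, and no continuity of the uniformizing maps (hence of the conformal factors $|f'(x_k)|^{\bb}$) in the curve is established anywhere in the paper. As written, this step is asserted rather than proved, and it is not a routine verification.

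The paper avoids the issue entirely with one preliminary move you omit: before passing to the limit, apply the conformal covariance (\ref{covariance-Q}) with an affine map to normalize $x^n_1=0$, $x^n_2=1$ for every $n$ (and for the limit point). Then the integral is against the \emph{single fixed} probability measure $\mu^{\HH}_{(0,1)}$, the random curve $\eta$ never moves, and only the remaining $2N-2$ marked points vary. For $\mu^{\HH}_{(0,1)}$-a.e.\ fixed $\eta$ the limit points $x^0_k$ ($k\ge 3$) avoid $\eta$, so for large $n$ each $x^n_k$ sits on the boundary of the same component as $x^0_k$, and continuity of the integrand reduces to the induction hypothesis applied componentwise together with smooth dependence of the (fixed) uniformizing maps on the marked points --- no coupling, no Carath\'eodory stability, no topology change. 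You should incorporate this normalization; without it your argument is incomplete at its central step. (A small additional remark: your appeal to finiteness of $I^{\HH;\alpha_j}_{P_{\ha j}(\ulin y)}$ for integrability of the dominating function is unnecessary --- the dominating function is a constant integrated against a probability measure.)
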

\begin{proof}
  The continuity of $H$ when $N=1$ is obvious because $H(a,b)=|a-b|^{-2\bb}$. Suppose $H$ is continuous on $\LP_k$ for any $k\le N$. We now show that $H$ is continuous on ${\LP}_{N+1}$.
  Suppose $\ulin x^n\to\ulin x^0$ in $\LP_{N+1}$. We need to show that $H(\ulin x^n)\to H(\ulin x^0)$. We may and will assume that $x^n_1=0$ and $x^n_2=1$ for all $n\in\N\cup\{0\}$ by applying (\ref{covariance}) to linear functions.

  By  (\ref{I},\ref{HI}),
  $$H(\ulin x^n)= \int \mu^{\HH}_{(0,1)}(d\eta) H^{\HH\sem \eta}_{P_{\ha 1}(\ulin x^n)},\quad n\in\N\cup\{0\}.$$
  For any fixed $\eta$, $H^{\HH\sem \eta}_{P_{\ha 1}(\ulin x^n)}$ equals the product of the partition functions in each component of $\HH\sem \eta$ whose boundary contains points of $\ulin x^n$.
  By applying the continuity of $H$ on $\LP_k$ for $k\le N$ and the conformal covariance of $H$ to each component of $\HH\sem\eta$, we get
  $H^{\HH\sem\eta}_{P_{\ha 1}(\ulin x^n)}\to H^{\HH\sem\eta}_{P_{\ha 1}(\ulin x^0)}$. By monotonicity, $H^{\HH\sem\eta}_{P_{\ha 1}(\ulin x^n)}\le  H(P_{\ha 1}(\ulin x^n))\to H(P_{\ha 1}(\ulin x^0))$. So we get $H(\ulin x^n)\to H(\ulin x^0)$ by dominated convergence theorem. The proof is completed by induction.
\end{proof}

\begin{Theorem}
    Let $\ulin u=(u_1,\dots,u_{2N})\in \LP_N$, $N\ge 2$. Let $B_t$ be a standard Brownian motion. Let $\{a,b\}=\{2j_0-1,2j_0\}$ for some $j_0\in\N_N$. Let $W_t$ and $V_t$  solve the system of SDE/ODE
  \BGE \left\{\begin{array}{ll}
  dW_t=\frac{\kappa-6}{W_t-V_t}\,dt+\sqrt\kappa dB_t, & W_0=u_a;\\
  dV_t=\frac{2}{V_t-W_t}\,dt, & V_0=u_b.
  \end{array}
  \right.
  \label{WV'}
  \EDE
  Let $g_t$ be the chordal Loewner maps driven by $W_t$, and let $\tau$ be the first time that $g_t(u_j)$ is not defined for some $j\in\N_{2N}\sem\{a\}$. Let $U^j_t=g_t(u_j)$ for $j\ne a$, and $U^a_t=W_t$. Note that $U^b_t=V_t$. Then
  \BGE  M_t:=\Big(\prod_{j\not\in \{a,b\}}  |g_t'(u_j)|\Big)^{\bb} H(W_t,V_t)^{-1} H(U^1_t,\dots,U^{2N}_t), \quad 0\le t<\tau,\label{til-M'}\EDE
  is a bounded local martingale.  \label{Lem-martingale'}
\end{Theorem}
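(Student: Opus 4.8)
The plan is to avoid It\^o's calculus entirely. Since $H$ is not assumed smooth, one cannot compute $dM_t$ and check that its drift vanishes (that is precisely the route deferred to the smooth case later); instead I would realize $M_t$ as a Doob martingale, i.e.\ as the conditional expectation of a single bounded random variable with respect to a growing filtration. The first step is to identify the underlying curve. The system (\ref{WV'}) is the standard SLE$_\kappa(\kappa-6)$ description: $W_t$ is the driving function of a chordal SLE$_\kappa$ curve $\eta$ in $\HH$ from $u_a$ to $u_b$, parametrized by half-plane capacity, with $V_t=g_t(u_b)$ tracking the image of the target. Thus the law of $\eta$ is $\mu^{\HH}_{(u_a,u_b)}=\mu^{\HH}_{\alpha_{j_0}}$, and $\eta(t)=g_t^{-1}(W_t)$ is the tip. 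Writing $K_t$ for the hull and $\ulin U_t=(U^1_t,\dots,U^{2N}_t)$, the point $\ulin U_t$ is exactly the image under $g_t$ of the configuration obtained from $\ulin u$ by replacing $u_a$ with the tip $\eta(t)$.

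Next I would rewrite $M_t$ as a partition-function integral in the slit domain. Using the special value $H(W_t,V_t)=|W_t-V_t|^{-2\bb}$ and (\ref{HI}) applied in $\HH$ to the configuration $\ulin U_t$ with the link $j_0$ removed,
\[
\frac{H(\ulin U_t)}{H(W_t,V_t)}=I^{\HH;(W_t,V_t)}_{P_{\ha{j_0}}(\ulin U_t)}.
\]
Then I would apply the conformal covariance of $I$ in (\ref{covariance-I}) to the map $g_t\colon \HH\sem K_t\to\HH$, with chordal endpoints $(\eta(t),u_b)$ and remaining marked points $P_{\ha{j_0}}(\ulin u)$, noting $g_t(P_{\ha{j_0}}(\ulin u))=P_{\ha{j_0}}(\ulin U_t)$. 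The crucial feature is that (\ref{covariance-I}) carries derivative factors only at the points of the reduced pattern and \emph{none} at the chordal endpoints, because the chordal SLE law entering the definition (\ref{I}) of $I$ is conformally invariant. Hence the potentially singular factor $|g_t'(\eta(t))|$ at the tip never appears, and the factors $\prod_{j\notin\{a,b\}}|g_t'(u_j)|^{\bb}$ combine with the ratio above to give
\[
M_t=I^{\HH\sem K_t;(\eta(t),u_b)}_{P_{\ha{j_0}}(\ulin u)}=\int \mu^{\HH\sem K_t}_{(\eta(t),u_b)}(d\gamma)\,H^{(\HH\sem K_t)\sem\gamma}_{P_{\ha{j_0}}(\ulin u)}.
\]
Here I only use that $g_t$ extends analytically across $\R$ near the points $u_j$, $j\notin\{a,b\}$ (valid for $t<\tau$, as these are not swallowed), together with conformal invariance of $\mu$ between prime ends.

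Finally I would invoke the domain Markov property of chordal SLE in the target-tracking form: conditionally on $\eta[0,t]$, the remainder of $\eta$ is a chordal SLE$_\kappa$ from $\eta(t)$ to $u_b$ in $\HH\sem K_t$, whose concatenation with $\eta[0,t]$ recovers the full curve $\eta$. Writing $\F_t$ for the SLE filtration, the previous display therefore reads
\[
M_t=\EE\big[H^{\HH\sem\eta}_{P_{\ha{j_0}}(\ulin u)}\,\big|\,\F_t\big],\qquad 0\le t<\tau .
\]
Since $\kappa\le 6$ gives the monotonicity of $H$, the integrand is bounded: $H^{\HH\sem\eta}_{P_{\ha{j_0}}(\ulin u)}\le H(P_{\ha{j_0}}(\ulin u))<\infty$. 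Thus $M_t$ is the Doob martingale of a bounded random variable, hence a bounded (indeed uniformly integrable true) martingale, in particular a bounded local martingale, as claimed.

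The main obstacle is the covariance step in the slit domain: one must guarantee that no derivative factor is produced at the tip $\eta(t)$, where $\pa(\HH\sem K_t)$ is not smooth for $\kappa\in(4,8)$ and $g_t'(\eta(t))$ is ill-behaved. This is exactly what the conformal invariance of the chordal SLE law—and hence the endpoint-free covariance (\ref{covariance-I}) of $I$—supplies, and it is the reason for packaging the computation through $I$ rather than applying (\ref{covariance-Q}) directly (which would introduce the singular tip factor). A secondary point is to apply the domain Markov property aimed at the finite target $u_b$ rather than $\infty$, and to confirm integrability via monotonicity so as to upgrade to a genuine bounded martingale.
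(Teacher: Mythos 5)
Your proof is correct and follows essentially the same route as the paper's: identify the Loewner flow (\ref{WV'}) via SLE coordinate changes as a chordal SLE$_\kappa$ from $u_a$ to $u_b$, rewrite $M_t$ as $I^{\HH\sem K_t;(\eta(t),u_b)}_{P_{\ha{j_0}}(\ulin u)}$ using (\ref{HI}) and (\ref{covariance-I}), and then realize it as the Doob martingale $\EE[H^{\HH\sem\eta}_{P_{\ha{j_0}}(\ulin u)}\mid\F_t]$ via the domain Markov property, with boundedness from monotonicity. The only cosmetic difference is that the paper phrases the identity at an arbitrary stopping time $T$ on the event $\{T<\tau\}$ rather than at deterministic times, but the substance is identical.
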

\begin{proof}
  If $N=1$, $M_t$ is constant $1$, and the conclusion is obvious. Suppose $N\ge 2$. By symmetry we may assume that $a=1$ and $b=2$. Let $K_t$ and $\gamma(t)$, $0\le t<\tau$, be the chordal Loewner hulls and curve driven by $W_t$.    By SLE coordinate changes (\cite{SW}), $\gamma$ is a part of a chordal SLE$_\kappa$ curve, say $\eta$, in  $\HH$ from $x_1$ to $x_2$. Let $Z=H^{\HH\sem \eta}_{(u_3,\dots,u_{2N})}$. Let $T$ be any stopping time. Let $\eta_T$ and $\eta^T$ be respectively the parts of $\eta$ before and after $T$. By DMP of SLE$_\kappa$, conditionally on $\F_T$ and the event $E_T:=\{T<\tau\}$, $\eta^T$ is an SLE$_\kappa$ in $\HH\sem K_T$ from $\eta(T)$ to $u_2$. Since
$Z=H^{(\HH\sem \eta_T)\sem \eta^T}_{(u_3,\dots,u_{2N})}=H^{(\HH\sem K_T)\sem \eta^T}_{(u_3,\dots,u_{2N})}$ on $E_T$, we get
$\EE[Z|\F_T,E_T]=I^{\HH\sem K_t;(\eta(T),u_2)}_{(u_3,\dots,u_{2N})}$.
 Since $g_T$ maps $\HH\sem K_T$ conformally onto $\HH$, and sends $\eta(T)$ and $u_2$  respectively to $W_t$ and $V_T$, by (\ref{covariance-I},\ref{HI}), on the event $E_T$, $$M_T=\Big(\prod_{j\not\in \{1,2\}}  |g_t'(u_j)|\Big)^{\bb}I^{\HH;(W_t,V_t)}_{(U^3_t,\dots,U^{2N}_t)}=I^{\HH\sem K_t;(\eta(T),u_2)}_{(u_3,\dots,u_{2N})}=\EE[Z|\F_T]$$ on the event $E_T=\{T<\tau\}$. Since this holds for any stopping time $T$, and $Z\le H(u_3,\dots,u_{2N})$ by monotonicity, we get the conclusion.
\end{proof}

Once we know that $H$ is smooth,  It\^o's calculus together with (\ref{Loewner},\ref{Loewner'}) implies that $H$ satisfies the PDE for multiple SLE partition function (cf.\ \cite[Formula (1.2)]{KP-multiple}):
\BGE {\cal D}_r H:= \Big[\frac\kappa 2 \pa_r^2   +\sum_{j\ne r}  \Big(\frac 2{x_j} \pa_j -\frac {2\bb}{x_j^2}\Big)\Big] H=0,\quad 1\le r\le N,
\label{PDE-H}
\EDE

For $\kappa\in (0,6]$, if $(\eta_1,\dots,\eta_N)$ is an $N$-SLE$_\kappa$  in $D$ with link pattern $\alpha$, then for every $j\in\N_N$, the law of $\eta_j$ is absolutely continuous w.r.t.\ $\mu^{D}_{\alpha_j}$ with Radon-Nikodym derivative $H^{D\sem \eta}_{P_{\ha j}(\alpha)}/H^D_\alpha$. To see this for $j=1$, we use (\ref{cascade}) to conclude that
$Q^D_\alpha=Q^D_{\alpha_1}(d\eta_1)\otimes Q^{D\sem \eta_1}_{P_{\ha 1}(\alpha)}(d(\eta_2,\dots,\eta_N))$,
and then apply (\ref{P1mu*}). The statement for other $j$ follows from symmetry. Unlike the case $\kappa\in (0,4]$, if $\kappa>4$, for any $j\ne k\in\N_N$, the joint law of $(\eta_j, \eta_k)$  is not absolutely continuous w.r.t.\ $ \mu^{D}_{\alpha_j}\times \mu^D_{\alpha_k}$.

The partition function may also be used to describe the law of local multiple SLE. Let $(\eta_1,\dots,\eta_N)$ be multiple SLE$_\kappa$ in $U$ with link pattern $\alpha$. Suppose $\alpha_j=(a_j,b_j)$, and let $U^a_j,U^b_j$ be neighborhoods of $a_j,b_j$ in $D$, $j\in\N$,   mutually disjoint closures. Let $\eta^a_j$ (resp.\ $\eta^b_j$) be the part of $\eta_j$ started from $a_j$ (resp.\ $b_j$) and stopped on exiting  $U^a_j$ (resp.\ $U^b_j$). Then the joint law of
$(\eta^a_1,\dots,\eta^a_N;\eta^b_1,\dots,\eta^b_N)$
is absolutely continuous w.r.t.\ the product of the laws of SLE$_\kappa$ in $D$ from $a_j$ to $b_j$ stopped when exiting $U^a_j$ for $1\le j\le N$ and the laws of SLE$_\kappa$ in $D$ from $b_j$ to $a_j$ stopped when exiting $U^b_j$ for $1\le j\le N$, and the Radon-Nikodym derivative could be expressed in terms of multiple SLE$_\kappa$ partition function and Brownian loop measure.

\end{document}